\documentclass[12pt]{amsart}
\usepackage{amssymb,amsmath,enumerate,array,
color,makecell,geometry,multirow,
hyperref,bm}
\usepackage[english]{babel}
\usepackage[all]{xy}
\newcommand{\U}{\mathrm{U}}
\allowdisplaybreaks
\def\Stone#1{\fbox{\makebox[13mm]{\strut#1}}\kern2pt}
\newtheorem{theorem}{Theorem}[section]

\newtheorem{corollary}[theorem]{Corollary}
\newtheorem{proposition}[theorem]{Proposition}
\newtheorem{example}[theorem]{Example}
\newtheorem{remark}[theorem]{Remark}

\newtheorem{definition}[theorem]{Definition}

\begin{document}

\title[Gelfand--Tsetlin bases for representations of $U_{q}(\mathfrak{gl}_n)$ at roots of unity]
{Gelfand--Tsetlin bases for representations of $\U_{q}(\mathfrak{gl}_n)$ at roots of unity}
\author{Hao Chang}
\address{School of Mathematics and Statistics, Central China Normal University, Wuhan, Hubei 430079, China}
\email{chang@ccnu.edu.cn }
\author{Ruiying Hou}
\address{School of Mathematics and Statistics, Central China Normal University, Wuhan, Hubei 430079, China}
\email{houry1998@163.com }
\author{Jian Zhang}
\address{School of Mathematics and Statistics, Central China Normal University, Wuhan, Hubei 430079, China}
\email{jzhang@ccnu.edu.cn}
\subjclass[2020]{Primary 17B37}
\thanks{{\scriptsize  Keywords: quantum groups, Gelfand-Tsetlin basis, roots of unity.
}}

\begin{abstract}
In this paper, we construct Gelfand--Tsetlin bases for representations of $\mathrm U_q(\mathfrak{gl}_n)$ at roots of unity. As an application, we obtain explicit bases for certain baby Verma modules of $\mathrm U_q(\mathfrak{gl}_3)$ and determine their composition series.
\end{abstract}
\maketitle
	
\section{Introduction}
The quantum group $\U_q(\mathfrak{g})$ is a deformation of the universal enveloping algebra $\U(\mathfrak{g})$ of a simple Lie algebra $\mathfrak{g}$.
The representation theory of quantum groups has been a remarkably active area of mathematics,
and its deep connections to theoretical physics have been increasingly revealed over the past decades.
A key instance is
Chern-Simons theory: Witten \cite{WE1989} showed that it produces link invariants for compact simple Lie groups,
and Reshetikhin and Turaev \cite{RT1991} constructed these invariants rigorously from the quantum group at roots of unity.

For generic $q$, the representation theory of $\U_q(\mathfrak{g})$ closely parallels that of its classical counterpart $\U(\mathfrak{g})$.
When $q$ is a root of unity, however, the quantum group admits a large central subalgebra, and the category of finite-dimensional representations is no longer semisimple.
The representation theory of $\U_q(\mathfrak{g})$ at roots of unity is considerably richer than that for generic $q$.
A series of fundamental results due to De Concini, Kac, Procesi \cite{DK1990,DKC1992,DK1992}, as well as Lusztig \cite{Lusztig1989,Lusztig199001,Lusztig199002}, laid the foundations for the study of quantum groups at roots of unity. Furthermore, it is well known that there is a deep connection between the representation theory of algebraic groups over fields of characteristic $p$ and that of the corresponding quantum groups at $p$-th roots of unity \cite{AJS94}. Nevertheless, explicit constructions of irreducible modules remain unknown in full generality.

The Gelfand--Tsetlin construction provides a systematic approach to the explicit study of representations. Originating from the work of Gelfand and Tsetlin \cite{GT1950}, this theory has been developed extensively in subsequent works \cite{DOF1994, Molev2006, Zhe2017}, among the others. These techniques have proven to be particularly useful in the study of Gelfand--Tsetlin modules for $\mathfrak{gl}_n$ \cite{FGR2016, FGR2017, FRZ2019, V2017, V2018, Za2017}.
The construction of Gelfand--Tsetlin basis is extend to the quantum group $\U_q(\mathfrak{gl}_n)$ for generic $q$ in the foundational work of Jimbo \cite{Jimbo1988},
with a proof later provided by Ueno-Shibukawa-Takebayashi \cite{UST1989, UST1990}.
Mazorchuk and Turowska \cite{MT2000} subsequently studied generic Gelfand--Tsetlin modules.
Futorny, Ramires and Zhang investigated the irreducibility of generic modules \cite{FRZ201801} and constructed singular Gelfand--Tsetlin modules
\cite{FRZ201802}.

In this work, we extend the classical Gelfand¨CTsetlin construction to quantum groups at roots of unity of odd order.
For every generic tableau, we construct an infinite dimensional module,
and different choices of submodules yield non-isomorphic quotients,
all of which have Gelfand--Tsetlin multiplicities bounded by $1$.
The special case recovers the construction of finite dimensional modules in \cite{AAC1995, AC1991, DT1997}.
For $1$-singular tableaux,
we construct
singular Gelfand--Tsetlin modules with Gelfand--Tsetlin multiplicities bounded by $2$.
Our methods also apply to quantum groups at roots of unity of even order and to  Lie algebras of characteristic $p$.
For instance, our approach allows us to recover all non-restricted representations from \cite{W2017} and further extends the known classification.

The paper is organized as follows.
In Section \ref{section:pre}, we recall some general facts about quantum groups.
In Section \ref{section name:GT basis}, with $q$ a root of unity,
we give explicit constructions of generic and $1$-singular modules.
These constructions lead to new realizations of certain finite-dimensional modules and enable us to determine their irreducibility.
As an application, in Section \ref{section:ex} we apply the Gelfand--Tsetlin bases
to the study of baby Verma modules for $\U_q(\mathfrak{gl}_3)$ and determine their composition series.
\section{Preliminaries}\label{section:pre}
This section is devoted to a brief review of the quantum group and conventions that will be used throughout the paper.
\subsection{Quantum enveloping algebras}
Let $P$ be the free $\mathbb{Z}$-lattice of rank $n$ with the canonical basis
$\{\epsilon_1, \ldots, \epsilon_n\}$, i.e. $P = \bigoplus_{i=1}^n \mathbb{Z}\epsilon_i$,
endowed with symmetric bilinear form $\langle \epsilon_i, \epsilon_j \rangle = \delta_{ij}$.
Let $\Pi = \{\alpha_j = \epsilon_j - \epsilon_{j+1} \mid j = 1,2,\ldots,n-1\}$
and $\Phi = \{\epsilon_i - \epsilon_j \mid 1 \leq i \neq j \leq n\}$.
Then $\Phi$ realizes the root system of type $A_{n-1}$ with $\Pi$ a basis of simple roots.

Given a Lie algebra $\mathfrak{g}$ we denote its quantum enveloping algebra by $\U_q(\mathfrak{g})$.
Throughout the paper we abbreviate $\U_q=\U_q(\mathfrak{gl}_n)$.
We define $\U_q$ as a unital associative complex algebra generated by $E_i, F_i, K_j, K_j^{-1},
i = 1, 2, \ldots, n - 1, j = 1, 2, \ldots, n$ subject to the relations:
	\begin{align*}
		K_i K_j = K_j K_i, \quad K_i K_i^{-1} = K_i^{-1} K_i = 1,\\
		K_i E_j K_i^{-1} = q^{\delta_{ij}-\delta_{i,j+1}} E_j,\\
		K_i F_j K_i^{-1} =  q^{\delta_{i,j+1}-\delta_{ij}} F_j,\\
		[E_i, F_j] = \delta_{ij} \frac{K_i  K_{i+1}^{-1}-K_{i+1}  K_{i }^{-1}}{q - q^{-1}},\\
		[E_i, E_j] = [F_i, F_j] = 0, \quad |i - j| \geq 2,\\
		E_i^2 E_{i\pm 1} - (q + q^{-1}) E_i E_{i\pm 1} E_i + E_{i\pm 1} E_i^2 = 0,\\
		F_i^2 F_{i\pm 1} - (q + q^{-1}) F_i F_{i\pm 1} F_i + F_{i\pm 1} F_i^2 = 0.
	\end{align*}
When $q$ is not a root of unity,
the center of $\U_q(\mathfrak{gl}_n)$ is generated by the following $n+1$ elements \cite{FRT1990}:
\begin{align}\label{cnk}
c_{nk} = \sum_{\sigma,\sigma' \in S_n} (-q)^{-l(\sigma)-l(\sigma')} l_{\sigma(1),\sigma'(1)}^+ \cdots l_{\sigma(k),\sigma'(k)}^+ l_{\sigma(k+1),\sigma'(k+1)}^- \cdots l_{\sigma(n),\sigma'(n)}^-,
\end{align}
where $S_n$ is the $n$-th symmetric group, $0 \leq k \leq n$,
\begin{align*}
	l_{ii}^{\pm}=K_i^{\pm1},\
	l_{i,i+1}^{+}=(q-q^{-1})K_iF_i,\
	l_{i+1,i}^{-}=-(q-q^{-1})E_{i}K_i^{-1}.
\end{align*}
Denote by $\mathfrak{z}_q$ the center of the algebra $\U_q(\mathfrak{gl}_n)$.
When $q$ is a root of unity, $c_{nk}$ belong to the center of $\U_q(\mathfrak{gl}_n)$,
but they do not generate the center.

\begin{proposition}\cite{KS1997}
Let $q$ be a $m$-th root of unity, $\alpha$ be a positive root.
Then the elements $E_\alpha^m$, $F_\alpha^m$, and $K_i^m$ ($i=1,2,\dots,n$) belong to $\mathfrak{z}_q$.
\end{proposition}

Denote by $\mathfrak{z}_0$ the subalgebra of $\mathfrak{z}_q$ generated by the elements $E_\alpha^m$, $F_\alpha^m$, and $K_i^m$.
\subsection{Finite dimensional modules for $\U_q(\mathfrak{gl}_n)$}
We recall the Gelfand--Tsetlin bases for finite-dimensional representations of $\U_q(\mathfrak{gl}_n)$ when $q$ is not a root of unity.

A \emph{Gelfand--Tsetlin tableau} of height $n$ is an triangular array
	\medskip
	\begin{center}
		
		\Stone{\mbox{ $v_{n1}$}}\Stone{\mbox{ $v_{n2}$}}\hspace{1cm} $\cdots$ \hspace{1cm} \Stone{\mbox{$v_{n,n-1}$}}\Stone{\mbox{ $v_{nn}$}}\\[0.2pt]
		\Stone{\mbox{$v_{n-1,1}$}}\hspace{1.7cm} $\cdots$ \hspace{1.8cm} \Stone{\mbox{ $v_{n-1,n-1}$}}\\[0.3cm]
		\hspace{0.2cm}$\cdots$ \hspace{0.8cm} $\cdots$ \hspace{0.8cm} $\cdots$\\[0.3cm]
		\Stone{\mbox{ $v_{21}$}}\Stone{\mbox{$v_{22}$}}\\[0.2pt]
		\Stone{\mbox{ $v_{11}$}}\\
		\medskip
	\end{center}
with entries
 $\{v_{ij} \mid 1 \leq j \leq i \leq n\}$ in $\mathbb{C}$.
A Gelfand--Tsetlin tableau is called \emph{standard} if
$v_{ki} - v_{k-1,i} \in \mathbb{Z}_{\geq 0}$
and $v_{k-1,i} - v_{k,i+1} \in \mathbb{Z}_{>0}$
for all $1 \leq i \leq k \leq n$.

For $1 \leq j \leq i \leq n$, $\delta^{ij} \in \mathbb{Z}^{\frac{n(n+1)}{2}}$ is defined by $(\delta^{ij})_{ij}=1$ and all other $(\delta^{ij})_{k\ell}$ are zero. The weights of $\U_q(\mathfrak{gl}_n)$ are written as $n$-tuples $\lambda=(\lambda_1, \dots, \lambda_n)$.
The following theorem describes the Gelfand--Tsetlin basis for simple
finite-dimensional
$\U_q(\mathfrak{gl}_n)$-modules with a given highest weight.

\begin{theorem}\label{Th:GT formulae}
\cite{Jimbo1988}
Let $q$ be generic, $L(\lambda)$ be the finite dimensional irreducible module over $\U_{q}$.
Then there exist a basis $T(v)$ of $L(\lambda)$
parameterized by all
standard  tableaux $v$ with fixed top row $v_{nj}=\lambda_j-j+1$.
Moreover, the action of the generators of $\U_{q}$ on $L(\lambda)$ is given by the \emph{Gelfand--Tsetlin formulae}${: }$
	\begin{equation}
    \label{GT-formulae}
		\begin{aligned}
		K_k(T(v))&=q^{a_k}T(v),\quad a_k=\sum_{i=1}^{k}v_{k,i}-\sum_{i=1}^{k-1}v_{k-1,i}+k,\ k=1,\ldots,n,\\
			E_{k}(T(v))&=-\sum_{j=1}^{k}
			\frac{\prod_{i=1}^{k+1} [v_{k,j}-v_{k+1,i}]_q}{\prod_{i\neq j} [v_{k,j}-v_{k,i}]_q}
			T(v+\delta^{kj}),\\
			F_{k}(T(v))&=\sum_{j=1}^{k}\frac{\prod_{i=1}^{k-1} [v_{k,j}-v_{k-1,i}]_q}{\prod_{i\neq j} [v_{k,j}-v_{k,i}]_q}T(v-\delta^{kj}).\\
		\end{aligned}
	\end{equation}
where $v_{ki}=\lambda_{ki}-i+1$, $v\pm\delta^{kj}$ is  obtained
from $v$ by adding $\pm 1$ to the $(k,j)$th entry of $v$,
and $T(v)$ is zero if it is not standard.
\end{theorem}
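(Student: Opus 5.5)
The plan is the standard two-stage argument. First, define an abstract vector space $V(\lambda)$ with basis $\{T(v)\}$ indexed by the standard tableaux with top row $v_{nj}=\lambda_j-j+1$, and let the generators act by the formulae \eqref{GT-formulae}. We show these operators satisfy the defining relations of $\U_q$. Second, we show that the resulting module is irreducible with highest weight $\lambda$, so it is isomorphic to $L(\lambda)$.

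\emph{Step 1: the formulae are well defined.} For a standard $v$, every denominator $[v_{k,j}-v_{k,i}]_q$ with $i\neq j$ is nonzero. Indeed, interlacing forces $v_{k,1}>v_{k,2}>\cdots>v_{k,k}$ as integers, and since $q$ is generic, $[m]_q\neq 0$ for $m\neq 0$.

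Next we check that the convention ``$T(v)=0$ if $v$ is not standard'' is consistent. Suppose $v+\delta^{kj}$ fails to be standard. Then $v_{k,j}+1=v_{k+1,j}$, so the factor with $i=j$ in the $E_k$ numerator is $[0]_q=0$. (Raising $v_{kj}$ cannot violate the lower-row inequalities.) The same mechanism handles $F_k$, via $v_{k,j}-1=v_{k-1,j-1}-1$.

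This vanishing-numerator trick lets us treat all the formulae as rational identities in the entries $v_{ij}$, viewed as formal variables $x_{ij}$ with $q^{x_{ij}}$ indeterminates.

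\emph{Step 2: the relations.} The $K$-relations are immediate, since $a_k$ changes by $\pm1$ exactly as required when $v_{k,j}$ or $v_{k-1,j}$ shifts. Commutation $[E_i,E_j]=0$ for $|i-j|\ge 2$ is also direct, because the coefficients involve disjoint rows.

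The heart of the argument is $[E_k,F_k]$.
\begin{itemize}
\item Off-diagonal terms $T(v+\delta^{kj}-\delta^{kl})$ with $j\neq l$ cancel. This follows from a direct comparison of the two products of coefficients.
\item The diagonal term reduces to a partial-fraction identity in the variables $z_j=q^{2x_{kj}}$:
\[
\sum_{j}\Bigl(\frac{\prod_{i}[x_{kj}-x_{k+1,i}]\prod_i[x_{kj}-x_{k-1,i}-1]}{\prod_{i\neq j}[x_{kj}-x_{ki}][x_{kj}-x_{ki}+1]}-(\text{shifted})\Bigr)=[a_k-a_{k+1}]_q .
\]
We prove it by viewing the left side as a rational function of one auxiliary variable, computing its residues, and evaluating its behaviour at infinity. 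This is the $q$-analogue of the classical Gelfand--Tsetlin identity.
\end{itemize}

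The quantum Serre relations are the main obstacle. We first reduce to the case of $E_k^2E_{k+1}$ acting on a basis vector. Expanding it gives a sum over pairs of shifts in rows $k$ and $k+1$. The coefficient of each resulting basis vector then becomes an identity. In it, the $(q+q^{-1})$ term arises from $[x+1]+[x-1]=(q+q^{-1})[x]$, and cancellation happens pairwise between the orderings in which row $k$ is shifted at positions $j$ and $j'$. We would first try to verify this for $n=3$, and then note that only rows $k-1,k,k+1,k+2$ enter. Alternatively, we can avoid the direct computation by citing the construction of Ueno--Shibukawa--Takebayashi. They build $T(v)$ inside $L(\lambda)$ from lowering operators along the chain $\U_q(\mathfrak{gl}_1)\subset\cdots\subset\U_q(\mathfrak{gl}_n)$, and the formulae then follow from restricting through that chain.

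\emph{Step 3: identification.} The module is a weight module by the $K$-formula. The tableau $v^0$ with $v_{k,i}=v_{n,i}$ for all $k$ is killed by every $E_k$, since the numerator contains $[v_{k,j}-v_{k+1,j}]=[0]$. Its weight is $\lambda$.

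We then show that every $T(v)$ is reached from $T(v^0)$ by $F$'s, and that every vector is sent back to $T(v^0)$ by $E$'s, using the nonvanishing of the coefficients on standard tableaux. Together these give irreducibility, and hence $V(\lambda)\cong L(\lambda)$. As a consistency check, the dimension count agrees with Weyl's formula, since standard tableaux with this top row number the same as semistandard Young tableaux of shape $\lambda$.
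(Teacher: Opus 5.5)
The paper gives no proof of this theorem. It quotes the formulae from Jimbo and refers to Ueno--Shibukawa--Takebayashi, who build the basis with $q$-lowering operators along $\U_q(\mathfrak{gl}_1)\subset\cdots\subset\U_q(\mathfrak{gl}_n)$. So your fallback of citing UST is exactly the paper's route.

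Your direct verification is a different route, and as written it rests on a false claim in Step 1. It is not true that non-standardness of $v\pm\delta^{kj}$ is always detected by a vanishing numerator. (Also, the upper inequality fails for $v+\delta^{kj}$ when $v_{kj}=v_{k+1,j}$, not when $v_{kj}+1=v_{k+1,j}$.)
\begin{itemize}
\item Raising $v_{kj}$ can break the lower inequality $v_{k-1,j-1}>v_{kj}$, which the $E_k$-coefficient does not see.
\item Lowering $v_{kj}$ can break $v_{kj}>v_{k+1,j+1}$, and the $F_k$-coefficient does not involve row $k+1$ at all.
\end{itemize}
Two examples:
\begin{itemize}
\item For $n=2$, $F_1T(v)=T(v-\delta^{11})$ with coefficient $1$, so the lowest vector is killed only by the convention.
\item For $n=3$, $\lambda=(2,1,0)$, take the standard tableau with rows $(2,0,-2)$, $(2,-1)$, $(0)$. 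Then $v+\delta^{22}$ violates $v_{11}>v_{22}$, yet its coefficient in $E_2T(v)$ is $-[-1]_q[1]_q=1$.
\end{itemize}
So the convention does real work. The relations on the span of standard tableaux are not specializations of the formal rational identities. Those identities contain paths through non-standard intermediate tableaux, which are absent in the module. Some of these path terms are $0/0$ at integral points. For instance, in $[E_k,F_k]$, when $v_{k,j-1}=v_{k-1,j-1}=v_{kj}+1$, the denominator $[v_{kj}+1-v_{k,j-1}]_q$ of $F_k$ at $v+\delta^{kj}$ vanishes.

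This is repairable, but it has to be done.
\begin{itemize}
\item For $[E_k,F_l]$ with $l\neq k$, and for the off-diagonal part of $[E_k,F_k]$, check that a standard endpoint forces standard intermediates, so only regular terms occur. You omitted $[E_k,F_{k\pm1}]=0$; it is a one-line computation.
\item For the diagonal part of $[E_k,F_k]$, restrict the rational identity to a generic line $v+tw$ and let $t\to 0$. A path through a non-standard intermediate breaks an inequality with equality, so it carries a vanishing numerator factor. In the degenerate case above it carries two such factors against one vanishing denominator factor. Hence these paths tend to $0$, while the standard paths are regular at $t=0$.
\item Your reduction of the Serre relations to $n=3$ fails, because the rows involved have lengths $k,k+1,k+2$. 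No computation is needed, though. In the algebra defined by all relations except Serre's, $S^+=E_k^2E_l-(q+q^{-1})E_kE_lE_k+E_lE_k^2$ with $|k-l|=1$ commutes with every $F_r$. Hence, on $V$, $S^+$ is killed by $\operatorname{ad}F_k\colon x\mapsto (F_kx-xF_k)\tilde K_k$, where $\tilde K_k=K_kK_{k+1}^{-1}$. It also has $\operatorname{ad}\tilde K_k$-weight $q^3$. In the finite-dimensional type-$1$ $\U_q(\mathfrak{sl}_2)$-module $\operatorname{End}(V)$, such a vector must vanish. The $F$-relations follow by twisting with the automorphism $E_i\leftrightarrow F_i$, $K_i\leftrightarrow K_i^{-1}$.
\end{itemize}

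Two smaller points.
\begin{itemize}
\item In Step 3, ``every vector is sent back to $T(v^0)$ by $E$'s'' only handles basis vectors, since $E_k$ and $F_k$ produce linear combinations. It is simpler to use complete reducibility at generic $q$ together with $\sum_v e^{\mathrm{wt}(v)}=s_\lambda$.
\item With $a_k$ as printed one gets $K_kT(v^0)=q^{\lambda_k+1}T(v^0)$. The constant should be $k-1$ rather than $k$ for the highest weight to be $\lambda$. This is harmless for the relations.
\end{itemize}
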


The next proposition gives the explicit action of the generators of $\Gamma_q$.
\begin{proposition}
 \cite{FRZ201801}The generator $c_{mk}$ of $\Gamma_q$ acts on $T(v)$ as multiplication by
\begin{equation}
\label{action of  c_{mk}}
	\gamma_{mk}(v)=[k]_{q}![m-k]_{q}!q^{mk-m^2+k}\sum_{\tau}
q^{\sum_{i=1}^{k}v_{m\tau(i)}-\sum_{i=k+1}^{m}v_{m\tau(i)}},
\end{equation}
where $1 \leq m \leq n$,
$0 \leq k \leq m$,
$[k]_q! = [1]_q[2]_q\cdots [k]_q$ and
$\tau\in S_m$ is a $(k,m-k)$-shuffle such that $\tau(1)<\cdots<\tau(k), \tau(k+1)<\cdots<\tau(m)$.
\end{proposition}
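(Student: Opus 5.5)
We argue by reducing to highest weight vectors, since $c_{mk}$ is central in $\U_q(\mathfrak{gl}_m)$. Fix $1\le m\le n$ and $0\le k\le m$, and regard $c_{mk}$ as the element \eqref{cnk} built from the generators of $\U_q(\mathfrak{gl}_m)\subset \U_q(\mathfrak{gl}_n)$, namely $E_i,F_i$ for $i<m$ and $K_j$ for $j\le m$. By the cited result of \cite{FRT1990} it is central in $\U_q(\mathfrak{gl}_m)$.

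\textbf{Step 1: reduction to a highest weight vector.} The Gelfand--Tsetlin formulae \eqref{GT-formulae} for $E_i,F_i$ with $i<m$ and $K_j$ with $j\le m$ only change rows $1,\dots,m-1$ of the tableau. Hence the span of all standard $T(w)$ that agree with $v$ in rows $m,\dots,n$ is a $\U_q(\mathfrak{gl}_m)$-submodule. By Theorem \ref{Th:GT formulae} applied to $\mathfrak{gl}_m$, this submodule is isomorphic to the irreducible module $L(\mu)$ with $\mu_j=v_{mj}+j-1$. By Schur's lemma $c_{mk}$ acts on it, and in particular on $T(v)$, by a scalar. That scalar can be computed on the highest weight vector $u$ of $L(\mu)$, which corresponds to the tableau with $w_{ij}=v_{mj}$ for all $i<m$.

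\textbf{Step 2: only diagonal terms survive on $u$.} In each summand of \eqref{cnk}, the right-hand block $l^-_{\sigma(k+1),\sigma'(k+1)}\cdots l^-_{\sigma(m),\sigma'(m)}$ is a product of elements of nonnegative weight, and each off-diagonal $l^-$ is built from $E$'s (times Cartan elements). Applied to $u$, this block vanishes unless every factor is diagonal, that is, unless $\sigma(i)=\sigma'(i)$ for $i>k$. The left-hand $l^+$ block is then a product of nonpositive-weight factors. Since the result must have weight $0$ (it is a multiple of $u$ by Step 1), all its factors must also be diagonal. So $\sigma=\sigma'$, and
\[
c_{mk}u=\sum_{\sigma\in S_m} q^{-2l(\sigma)}\, q^{\sum_{i\le k}\mu_{\sigma(i)}-\sum_{i>k}\mu_{\sigma(i)}}\,u,
\]
after inserting the $K$-eigenvalues of $u$ (shifted as in the $K_k$ formula of \eqref{GT-formulae}).

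\textbf{Step 3: the combinatorial sum.} Write each $\sigma$ uniquely as a $(k,m-k)$-shuffle $\tau$ composed with a permutation of $S_k\times S_{m-k}$. Then $l(\sigma)$ is $l(\tau)$ plus the two internal lengths. The internal sums give $\sum_{S_k}q^{-2l}\cdot\sum_{S_{m-k}}q^{-2l}$, which equal $[k]_q![m-k]_q!$ times explicit powers of $q$. What remains is a sum over shuffles $\tau$ of $q^{-2l(\tau)}q^{\sum_{i\le k}\mu_{\tau(i)}-\sum_{i>k}\mu_{\tau(i)}}$.

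Now substitute $\mu_j=v_{mj}+j-1$. The contribution of the shifts $j-1$ is $\sum_{i\le k}(\tau(i)-1)-\sum_{i>k}(\tau(i)-1)$. Since $l(\tau)=\sum_{i\le k}(\tau(i)-i)$, this linear term should cancel exactly against $-2l(\tau)$ up to a constant independent of $\tau$. That constant, together with the constants from the $q$-factorials and the Cartan shift, should assemble into $q^{mk-m^2+k}$, giving \eqref{action of  c_{mk}}.

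\textbf{Main obstacle.} I expect the main obstacle to be Step 3, and to a lesser extent the precise normalization of the $K$-eigenvalues on $u$. Getting the global power $q^{mk-m^2+k}$ exactly right depends on the conventions for $l^\pm_{ii}$, the sign $(-q)^{-l(\sigma)-l(\sigma')}$, and the shift $+k$ in $a_k$. I would first verify the result for $m=1,2$ to pin down the conventions before doing the general bookkeeping.
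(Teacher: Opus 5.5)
The paper gives no proof of this proposition; it is quoted from \cite{FRZ201801}. So there is no internal argument to compare with. Your highest-weight computation is a correct, self-contained derivation, and there are only two small points to tighten.

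First, in Step 2 your reason for the diagonality of the $l^+$ block does not work as stated. The phrase ``the result is a multiple of $u$'' applies to the whole sum $c_{mk}u$, not to individual summands. The clean reason is combinatorial. Once $\sigma(i)=\sigma'(i)$ for $i>k$, the sets $\{\sigma(i)\}_{i\le k}$ and $\{\sigma'(i)\}_{i\le k}$ coincide. Each nonzero upper-triangular factor $l^+_{\sigma(i)\sigma'(i)}$ requires $\sigma(i)\le\sigma'(i)$, and equal sums then force $\sigma=\sigma'$. You are also using the standard FRT facts that $l^+_{ij}$ ($i<j$) and $l^-_{ij}$ ($i>j$) have weights $-(\epsilon_i-\epsilon_j)$ and $\epsilon_j-\epsilon_i$ respectively. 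The paper only writes down the entries with $|i-j|\le 1$.

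Second, the bookkeeping you flagged as the main obstacle does close. Use the paper's normalization $K_ju=q^{a_j}u=q^{v_{mj}+j}u$, together with $l(\tau)=\sum_{i\le k}(\tau(i)-i)$ and $\sum_{\pi\in S_k}q^{-2l(\pi)}=q^{-k(k-1)/2}[k]_q!$. Then the $\tau$-dependence cancels, and the constant exponent is
\[
k(k+1)-\frac{m(m+1)}{2}-\frac{k(k-1)}{2}-\frac{(m-k)(m-k-1)}{2}=mk-m^2+k,
\]
exactly as stated. With the unshifted eigenvalue $q^{v_{mj}+j-1}$ you would instead get $q^{mk-m^2+m-k}$. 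So it is the $+k$ in the paper's $a_k$ that produces the stated constant. As a check at $m=2$, $k=1$: both your direct evaluation $(K_1K_2^{-1}+q^{-2}K_2K_1^{-1})u$ and the formula give $q^{-1}(q^{v_{21}-v_{22}}+q^{v_{22}-v_{21}})$.

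Finally, your argument, like the statement's placement after Jimbo's theorem, covers standard tableaux at generic $q$. The paper later applies $\gamma_{rs}$ to generic tableaux at a root of unity, and that would need an extra specialization or density argument not contained here.
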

\subsection{Gelfand--Tsetlin modules}
Consider the following chain:
$$\U_q(\mathfrak{gl}_1) \subset \U_q(\mathfrak{gl}_2) \subset \cdots \subset \U_q(\mathfrak{gl}_n).$$
The subalgebra of $\U_q$ generated by the centers of
$\U_q(\mathfrak{gl}_m)
$, $1\leq m\leq n$,
is called the Gelfand--Tsetlin subalgebra of $\U_q$ and is denoted by $\Gamma_q$.

\begin{definition}
A finitely generated $\U_q$-module $M$ is called a Gelfand--Tsetlin module (with respect to
$\Gamma_q$) if
$$M=\bigoplus_{\eta \in  \Gamma_q^*}M (\eta),$$
where
$M(\eta) = \{ v \in M \mid (z-\eta(z))^{k}v=0  \text{ for some } k\geq 0 \text{and any }z\in \Gamma_q\}$,
$\eta$ is called a Gelfand--Tsetlin weight of $M$ if $M(\eta) \neq 0$.
\end{definition}
By Equation \eqref{action of c_{mk}},
each Gelfand--Tsetlin tableau determines a unique Gelfand--Tsetlin weight. Conversely, any Gelfand--Tsetlin weight  $\eta$ corresponds to a unique tableau,  up to permutation of the entries within each row.
\section {Gelfand--Tsetlin bases for representation of $\U_q(\mathfrak{gl}_n)$ at roots of unity}\label{section name:GT basis}
In this section, we assume that
$q=e^{\mathbf{i}\theta}$, $\mathbf{i}=\sqrt{-1}$, is a root of unity.
Then $\left\{\frac{2k\pi }{\theta} \mid k \in \mathbb{Z}\right\}$ is the set of all complex $x$ such that $q^x = 1$.
For convenience, we take $\theta=\frac{2\pi}{m}$ and $m$ is odd.
For any complex number $x$, we define
$$ [x]_q = \frac{q^x - q^{-x}}{q - q^{-1}}.$$
Then
\begin{equation*}
	[x]_q=0\Longleftrightarrow x  \in m\mathbb{Z}.
\end{equation*}
If $0\leq \lambda_1-\lambda_n < m-n+2$, then Theorem \ref{Th:GT formulae} holds for root of unity.

Recall that $\mathfrak{z}_q$ denotes the center of the algebra $\U_q(\mathfrak{gl}_n)$.
Let $\operatorname{Spec}(\mathfrak{z}_q)$ be the set of all algebra homomorphisms $\chi : \mathfrak{z}_q \to \mathbb{C}$, and
$\operatorname{Rep}(\U_q(\mathfrak{gl}_n))$ the set of all equivalence classes of irreducible representations of $\U_q(\mathfrak{gl}_n)$.

If $\rho$ is an irreducible representation of $\U_q(\mathfrak{gl}_n)$ on a vector space $V$,
then $\rho(z)$, $z \in \mathfrak{z}_q$, is a scalar operator on $V$, that is, there exists a complex number $\chi_{\rho}(z)$ such that
$$
 \rho(z) = \chi_ \rho (z)I.
$$
Obviously, $\chi_\rho : \mathfrak{z}_q \to \mathbb{C}$ is an algebra homomorphism,
called the \textit{central character} of $\rho$.
Clearly, equivalent representation have the same central character.
Therefore,
the assignment $\rho \mapsto \chi_\rho$ defines a map
$$
\Psi : \operatorname{Rep}(\U_q(\mathfrak{gl}_n)) \to \operatorname{Spec}(\mathfrak{z}_q).
$$
The mapping $\Psi$ is surjective, but not injective in general.

Every irreducible representation of the algebra $\U_{q}(\mathfrak {gl}_n)$ is finite-dimensional.

\begin{definition}\label{cyclic}
  An irreducible representation of $\U_{q}(\mathfrak {gl}_n)$ is called \emph{cyclic} if $E_i ^m$ and $F_i ^m$, $i = 1, 2, \dots, n-1$,
  are nonzero scalar operators.
\end{definition}
A cyclic representation is neither a highest nor a lowest weight representation.

Let $\U_q^+$, $\U_q^-$ and $\U_q^0$ be the subalgebras of $\U_{q}(\mathfrak {gl}_n)$ generated by the $E_i$,
the $F_i$, and the $K_i,K_i^{-1}$ ($i=1,\dots,n$) respectively.
The quantum group admits the triangular decomposition
$$\U_q(\mathfrak{gl}_n)=\U_q^-\otimes \U_q^0\otimes \U_q^+.$$

Recall that $\mathfrak{z}_0$ is the subalgebra of $\mathfrak{z}_q$ generated by the elements $E_\alpha^m$, $F_\alpha^m$, and $K_i^m$.
For each $\chi \in \operatorname{Maxspec}(\mathfrak{z}_0)$ such that $\chi(E_\alpha^m)=0$,
 let $\mathfrak{m}_\chi$ be the corresponding maximal ideal. Define the reduced algebra $\U_\chi \;:=\; \U_q(\mathfrak{gl}_n)\big/\mathfrak{m}_\chi \U_q(\mathfrak{gl}_n)$ and denote by $\U_\chi^{\ge0}$ the image of $\U_q^{\ge0}=\U_q^0\otimes \U_q^+$ in $\U_\chi$.
The \emph{baby Verma module} is defined to be the induced module
\begin{align}\label{babyvm}
M_{\chi}(\lambda):=\U_{\chi} \otimes_{\U_{\chi}^{\geq 0}} \mathbb{C}v_\lambda,
\end{align}
where $\mathbb{C}v_\lambda$ is the one-dimensional $\U_{\chi}^{\geq 0}$-module with $E_{\alpha} \cdot v_\lambda = 0$ and $K_{i} \cdot v_\lambda = q^{\lambda_{i}}v_\lambda$.
Any irreducible highest weight module is a quotient of $M_{\chi}(\lambda)$.
\subsection {Generic Gelfand--Tsetlin modules}
\label{subsec:generic-GT-modules}
For $q$ is not a  root of unity, the Geland-Tsetlin basis given in Theorem \ref{Th:GT formulae} are common eigenvectors for the Gelfand--Tsetlin subalgebra. This section is devoted to construct
Gelfand--Tsetlin modules containing Gelfand--Tsetlin weights associated with the generic tableaux when $q$ is a root of unity.

A Gelfand--Tsetlin tableau $T(v)$ is called \emph{generic} if
it satisfies the following defining conditions${: }$
		$$v_{ki}-v_{kj}\notin \frac{\mathbb{Z}}{2} \text{ for all }1\leq k\leq n-1 \text{ and } i\neq j.$$
For a generic tableau $v$, Arnaudon and Chakrabarti showed that the vector space spanned by $T(v+z)$,
where $z_{ij}\in \mathbb{Z}_m$ is a simple $\U_q(\mathfrak{gl}_n)$-module and the action of generators is given by Gelfand--Tsetlin formulae \cite{AC1991, KS1997}.
Schur's Lemma implies that
the actions of the central elements $E_k^m$ and $F_k^m$ are scalar multiplications.
For some choice of $v_{k,i}$,
the actions of $E_k^m$ and $F_k^m$
are nonzero.
In this case, the modules are cyclic.\cite{AC1991, KS1997}.

We will provide an alternative construction that generalizes the results of Arnaudon and Chakrabarti.
For a generic tableau $v$, we denote
by ${\mathcal B}(T(v))$  the set of all Gelfand--Tsetlin tableaux $T(R)$ of height $n$
satisfying $r_{nj}=v_{nj}$ and $r_{ij}-v_{ij}\in\mathbb{Z}$ for $1\leq j\leq i \leq n-1$.

Let $T(v)$ be a fixed tableau of height $n$.
\begin{enumerate}
 \item
$\mathcal{B}(T(v)) := \left\{T(v+z) \mid z \in \mathbb{Z}^{\frac{n(n+1)}{2}} \text{ with } z_{ni} = 0 \text{ for } 1 \leq i \leq n\right\}$.	
\item
$V(T(v)) = \operatorname{span}\mathcal{B}(T(v))$.
\end{enumerate}

\begin{remark}
\label{assume}
Let $\mathbb{C}$ be a one-dimensional representation
such that $K_i\cdot 1=-1$, $K_{i+1}\cdot 1=-1$, and $K_j\cdot 1=1$ for all $j\neq i,i+1$.
Then
$$
V\!\left(T\left(v + \frac{m}{2}\delta^{ij}\right)\right)
\simeq V(T(v))\otimes \mathbb{C}
$$
as $U_q$-modules.
Then we can assume that for any $a,b$ and $i,j$, either $v_{a,b}-v_{i,j}\in\mathbb{Z}$ or $v_{a,b}-v_{i,j}\notin\frac{\mathbb{Z}}{2}$.
\end{remark}

By symmetry of the
Gelfand--Tsetlin formulas \eqref{GT-formulae}, the representation obtained is invariant under permutations of the entries within any given row.
Hence, we only consider tableaux that satisfy the following condition£º
\begin{align}
\label{condition of tableaux}
\text{ If } \{j\mid v_{k,j}-v_{k-1,i}\in\mathbb{Z}\}\neq \emptyset,
\text{ then } \{j\mid v_{k,j}-v_{k-1,i}\in\mathbb{Z}\}=\{i\}
\text{ and } v_{k,i}=v_{k-1,i}.
\tag{*}
\end{align}
For $1 \leq k \leq n-1$, the $k$-th row contains at most one entry $v_{k,j}$ satisfying $v_{k,j} - v_{k-1,i} \in \mathbb{Z}$ and this condition implies  $v_{k,i} = v_{k-1,i}$.
For $k = n$, the $n$-th row can contain several entries $v_{n,j}$ with $v_{n,j} - v_{n-1,i} \in \mathbb{Z}$. Whenever this occurs, we have  $v_{n,i} = v_{n-1,i}$.

Let $\U_q(T(v))$ be the submodule of $V(T(v))$ generated by $T(v)$.
For any $(k,j)$ with $v_{kj} =v_{k-1,j} = \cdots =v_{tj}\neq v_{t-1,j}$,
denote by $W_{kj}^{(1)}$
the submodule of $\U_q(T(v))$ generated by
$$
T\bigl(v-m\delta^{kj}-m\delta^{k-1,j}-\cdots-m\delta^{tj}\bigr)-T(v).
$$

\noindent\textbf{Case {\upshape I}.}
If $v_{k+1,j} \neq v_{kj}$, then we set $W_{kj}=W_{kj}^{(1)}$.

\medskip
\noindent\textbf{Case {\upshape II}.}
If $v_{k+1,j}=v_{kj}$,
then
$$
W_{kj}^{(2)}:=\U_q\Bigl(T\bigl(v-m\delta^{kj}-m\delta^{k-1,j}-\cdots-m\delta^{tj}\bigr)\Bigr)
$$
is a proper submodule of $\U_q(T(v))$.
In this case we take $W_{kj}$ to be either $W_{kj}^{(1)}$ or $W_{kj}^{(2)}$.

Define
\begin{align}
\label{def of VT--generic}
\overline{V}(T(v)) = \U_q(T(v)) \bigg/ \sum_{k,j} W_{kj}.
\end{align}
It will be shown later (Theorem \ref{thm:structure}) that $\overline{V}(T(v))$ is finite dimensional.
We use the same notation $T(v+z)$ for vectors in this quotient module.

\begin{example}
Let $T(v)$ be a generic tableau with $v_{3,1}=v_{2,1}=v_{1,1}$ and $v_{2,2}-v_{k,j}\notin \tfrac{\mathbb{Z}}{2}$ for any $(k,j)\neq (2,2)$.
Define the corresponding submodules $W_{21}, W_{22}, W_{11}$ as follows$:$
$$
\begin{aligned}
W_{21} &:= \langle\, T(v - m\delta^{21} - m\delta^{11}) - T(v) \,\rangle
        \quad\text{or}\quad
        \langle\, T(v - m\delta^{21} - m\delta^{11}) \,\rangle,\\[4pt]
W_{22} &:= \langle\, T(v - m\delta^{22}) - T(v) \,\rangle,\\[4pt]
W_{11} &:= \langle\, T(v - m\delta^{11}) - T(v) \,\rangle
        \quad\text{or}\quad
        \langle\, T(v - m\delta^{11}) \,\rangle.
\end{aligned}
$$
The quotient module is then defined by
$$
\overline{V}(T(v))
:= \U_q(T(v))\big/ \bigl( W_{21} + W_{22}+W_{11} \bigr),
$$

\end{example}

\begin{theorem}\label{thm:structure}
Let $T(v)$ be a generic Gelfand--Tsetlin tableau.
\begin{enumerate}[{\rm(1)}]
\item
$\overline{V}(T(v))$ is a $\U_{q}$-module with dimension
$m^{\frac{n(n-1)}{2}}$,
and the action of generators is given by Gelfand--Tsetlin formulae \eqref{GT-formulae}.
\item
The action of generators $c_{rs}$ of Gelfand--Tsetlin subalgebra on  $\overline{V}(T(v))$ is given by
$$c_{rs}T(w)=\gamma_{rs}(w)T(w)$$
for any $T(w)=T(v+z)\in \overline{V}(T(v)) $
where $\gamma_{rs}$ is defined in
\eqref{action of c_{mk}}.
\item
For any fixed $v$ and distinct choices of $W_{kj}$ with $1\leq j\leq k\leq n-1$,
the quotients $\overline{V}(T(v))$
are pairwise non-isomorphic.
\end{enumerate}
\end{theorem}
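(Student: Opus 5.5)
We would first check that $V(T(v))$, with the generators acting by the Gelfand--Tsetlin formulae \eqref{GT-formulae}, is a $\U_q$-module, and only then pass to the quotient. Genericity means $v_{ki}-v_{kj}\notin\frac{\mathbb Z}{2}$ for $k\le n-1$. So for every tableau $T(v+z)$ in $\mathcal B(T(v))$ the denominators $[v_{kj}-v_{ki}+z_{kj}-z_{ki}]_q$ never vanish, and every coefficient is a well-defined rational function of $q^{v_{ij}}$. The defining relations of $\U_q$ (the $K$-relations, $[E_i,F_j]$, and the $q$-Serre relations) hold for Jimbo's formulae as identities of rational functions in the variables $q^{v_{ij}}$. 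This follows from Theorem \ref{Th:GT formulae}: these identities hold on a Zariski-dense set of standard tableaux for generic $q$, hence identically in $q$ and in the $q^{v_{ij}}$. By genericity the denominators do not vanish here, so we may specialize to our root of unity and to the given entries. Numerators may vanish; this only kills some coefficients and does not affect the relations. Thus $V(T(v))$ is a module, $\U_q(T(v))$ is a submodule, and the $W_{kj}$ are submodules.

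For (2), the same specialization argument applied to \eqref{action of c_{mk}} shows that $c_{rs}$ acts on each $T(w)$ by $\gamma_{rs}(w)$. The reason is that the identity $c_{rs}T(w)=\gamma_{rs}(w)T(w)$ is again a polynomial identity in the $q^{w_{ij}}$. This action descends to the quotient.

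For the dimension count in (1), the key observation is that $T(w)$ and $T(w\pm m\delta^{ij})$ have identical coefficients in \eqref{GT-formulae}, since $q^{m}=1$. For $(k,j)$ with $v_{kj}$ not integrally linked to the row below, i.e.\ Case I with $t=k$, we identify $T(w-m\delta^{kj})$ with $T(w)$. The resulting identification is compatible with the module structure. For a chain $v_{kj}=\dots=v_{tj}$, we shift the whole chain by $m$. This shift preserves the vanishing pattern of the numerators $[v_{kj}-v_{k-1,i}]_q$, so the map $T(w)\mapsto T(w-m(\delta^{kj}+\dots+\delta^{tj}))$ commutes with the $E_k$ and $F_k$ actions. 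Hence $W^{(1)}_{kj}$ is spanned by the differences $T(w)-T(\text{shifted } w)$.

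Next we describe $\U_q(T(v))$ explicitly. Condition \eqref{condition of tableaux} together with vanishing numerators like $[v_{k,i}-v_{k-1,i}]_q$ shows that $\U_q(T(v))$ is spanned by those $T(v+z)$ satisfying inequalities of the form $z_{k,i}\ge z_{k-1,i}$ (or, in Case II, $z_{k+1,j}\ge z_{kj}$) along each linked chain. This gives a semi-infinite "standard-like" region in each linked direction.

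Modding out by all $W_{kj}$ reduces each coordinate $z_{kj}$, $1\le j\le k\le n-1$, to a window of $m$ consecutive values. With $W^{(1)}$ this happens by periodicity. With $W^{(2)}$ the submodule generated by the shifted vector is exactly the span of tableaux lying beyond the window, because the boundary numerator $[v_{k+1,j}-v_{kj}]_q$ vanishes. In both cases a basis of the quotient is in bijection with $\mathbb Z_m^{n(n-1)/2}$, giving dimension $m^{n(n-1)/2}$. Linear independence of these images follows from (2): distinct residue classes have distinct Gelfand--Tsetlin characters (up to the $q^{m}$-periodicity), and a weight-space argument separates them.

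For (3), we compare the quotients arising from $W^{(1)}_{kj}$ and from $W^{(2)}_{kj}$. In the $W^{(2)}$ quotient, $E_k$ kills the vector $T(w)$ at the top of the window, because $[v_{kj}-v_{k+1,j}]_q=0$ and the wrap-around term is zero. So there is a Gelfand--Tsetlin weight vector annihilated by an $E$-type operator: the module has a "highest weight" in direction $(k,j)$. In the $W^{(1)}$ quotient the corresponding vector maps instead to the identified nonzero vector. An isomorphism must preserve the Gelfand--Tsetlin weight spaces, which are one-dimensional by (2). It must therefore preserve the set of weights $\eta$ for which $E_k$ (respectively $F_k$) maps the $\eta$-space to zero. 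These sets differ for different choices, so the quotients are non-isomorphic.

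\textbf{Main obstacle.} The hardest step is the precise description of $\U_q(T(v))$ and of $W^{(2)}_{kj}$ as spans of tableaux in explicit regions. One must show that the submodule generated by a single vector contains every tableau in the region. This requires nonvanishing of the relevant coefficients and the use of $c_{rs}$-eigenvalues to project onto individual basis vectors. A further difficulty is that the eigenvalues $\gamma_{rs}$ only separate residues modulo the $q^m$-periodicity, so the projection argument must be carried out inside a single period.
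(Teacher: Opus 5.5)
\textbf{Gap in part (3).} Your proposed invariant does not separate the two quotients. The coefficient of $T(w+\delta^{kj})$ in $E_kT(w)$ contains the factor $[w_{kj}-w_{k+1,j}]_q$. In Case II, where $v_{k+1,j}=v_{kj}$, this factor vanishes at the top of the window, $z_{kj}\equiv z_{k+1,j}\pmod m$. That happens whether you quotient by $W^{(1)}_{kj}$ or by $W^{(2)}_{kj}$: it is a property of the formula, not of the quotient. So in the $W^{(1)}$ quotient the top vector is not sent to an ``identified nonzero vector''; $E_k$ is blocked there as well. For $k\ge 2$ the test ``$E_k$ maps the $\eta$-space to zero'' is also too coarse, since the terms $T(w+\delta^{ki})$ with $i\neq j$ typically survive.

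The asymmetry sits at the bottom of the window and involves $F_k$. Take $T(v+z)$ with $z_{k+1,j}=0$, $z_{kj}=-(m-1)$ and $z_{kj}\neq z_{k-1,j}$. Then, using condition (*), the coefficient $\prod_i[w_{kj}-w_{k-1,i}]_q/\prod_{i\neq j}[w_{kj}-w_{ki}]_q$ of $T(v+z-\delta^{kj})$ in $F_kT(v+z)$ is nonzero. With $W^{(1)}_{kj}$ this vector wraps around to a nonzero basis vector; with $W^{(2)}_{kj}$ it falls outside the window and is zero. An isomorphism sends each one-dimensional Gelfand--Tsetlin weight space to the one with the same weight. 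It must therefore preserve whether the component of $F_kT(v+z)$ in that weight vanishes. This component-wise comparison is the paper's argument. Your ``(respectively $F_k$)'' gestures at it, but you never identify the vector or check that the sets actually differ.

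\textbf{Secondary issue in part (1).} For (1)--(2) you are in line with the paper, which just writes down the window basis and calls the rest straightforward. Your specialization argument for the module structure and the periodicity $[x+m]_q=[x]_q$ are reasonable additions. However, linear independence of the window vectors in the quotient does not follow from (2). The vectors $T(w)$ and $T(w-m\delta^{kj}-\cdots-m\delta^{tj})$ have the same Gelfand--Tsetlin weight, so weight separation cannot show that $\sum W_{kj}$ misses the window vectors. What is needed is an explicit description of $\sum W_{kj}$ inside each weight space of $\U_q(T(v))$: differences $T(w)-T(\sigma w)$ together with out-of-window tableaux. This is exactly the step you flag as the main obstacle, so independence is not yet established.
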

\begin{proof}
   A basis for $\overline{V}(T(v))$
   is given by the vectors $T(v+z)$, where $z=(z_{kj})$ satisfies
\begin{equation}
\label{condition}
\begin{aligned}
0\leq z_{k+1,j}-z_{k,j} \leq m-1,\\
1\leq j\leq k\leq n-1,
\end{aligned}
\end{equation}
here for any $(k,j)$,
$z_{kj}\in\mathbb{Z}_m$ if $W_{kj}=W_{kj}^{(1)}$,
 and $z_{kj}\in\mathbb{Z}$ if $W_{kj}=W_{kj}^{(2)}$.
For any $z_{k,j}\in \mathbb{Z}_m$ appearing in the inequalities,
we take the representatives from the set $\{-(m-1),\dots,0\}$.
Then the dimension of this module is
$m^{\frac{n(n-1)}{2}}$.
The rest of (1) and (2) are straightforward.

We now prove (3).
Recall that $W_{kj}^{(1)}$ is the submodule of $\U_q(T(v))$ generated by
$$T(v-m\delta^{kj}-m\delta^{k-1,j}-\cdots -m\delta^{tj})-T(v),$$
and $W_{kj}^{(2)}$ is the submodule of $\U_q(T(v))$ generated by
$$T(v-m\delta^{kj}-m\delta^{k-1,j}-\cdots -m\delta^{tj}).$$
For all $(r,s)\neq(k,j)$ the submodules $W_{rs}$ and $W_{rs}'$ are chosen arbitrarily,
we then set
$$
M^{(1)}:=\U_q(T(v))\Big/\!\Bigl(W_{kj}^{(1)}+\sum_{(r,s)}W_{rs}\Bigr),\qquad
M^{(2)}:=\U_q(T(v))\Big/\!\Bigl(W_{kj}^{(2)}+\sum_{(r,s)}W'_{rs}\Bigr).
$$

Suppose that there is an isomorphism $\varphi:M^{(1)}\to M^{(2)}$,
then the image of any
Gelfand--Tsetlin weight vector in $M^{(1)}$ is a Gelfand--Tsetlin weight vector with the same Gelfand--Tsetlin weight.
Now choose a basis vector $T(v+z)$ with
$z_{k+1,j}=0, z_{k,j}=-(m-1),$ and $z_{k,j}\neq z_{k-1,j}.$
Since the basis vectors in $M^{(1)}$ (resp. $M^{(2)}$) have the distinct Gelfand--Tsetlin weights,
we have $\varphi(T(v+z))=\alpha_z T(v+z)$,
where $\alpha_z$ is a non-zero scalar depending on $z$.
Since $\varphi(T(v+z))=\alpha_z T(v+z)$ with $\alpha_z\neq0$,
applying $F_k$ on both sides gives
$$
\varphi\bigl(F_kT(v+z)\bigr)
 =\alpha_z\,F_k T(v+z).
$$
By definition,
$T(v+z-\delta^{kj})$ is nonzero
in $M^{(1)}$ and
$T(v+z-\delta^{kj})$ is zero
in $M^{(2)}$.
The coefficient of $T(v+z-\delta^{kj})$ in $F_k T(v+z)\in M^{(1)}$ is nonzero.
We arrive at a contradiction.
\end{proof}

\begin{remark}
If $W_{kj} = W_{kj}^{(1)}$ for all $(k,j)$,
then $\overline{V}(T(v))$ coincides with the modules in \cite{AAC1995, AC1991}.
Suppose that $v$ is a generic tableau such that $v_{k+1,j} = v_{k,j}$ for all $1\leq j\leq k\leq n-1$.
Then by taking distinct choices of $W_{kj}$, we obtain
$2^{\frac{n(n-1)}{2}}$
pairwise non-isomorphic modules.
\end{remark}

By Theorem \ref{thm:structure}, we have the following corollary.
\begin{corollary}
\label{generic:separates distinct tableaux}
The basis vectors $T(v+z)$
of $\overline{V}(T(v))$ have distinct Gelfand--Tsetlin weights.
Let $\sum_z c_zT(v+z)$ be an element of $\overline{V}(T(v))$ with $c_{z_0}\neq0$,
then there exists an element $\gamma \in \Gamma_q$ such that $\gamma\cdot \sum_zc_zT(v+z)=T(v+z_0).$
\end{corollary}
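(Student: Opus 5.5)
We split the corollary into two claims. First, the basis vectors $T(v+z)$ of $\overline{V}(T(v))$ have pairwise distinct Gelfand--Tsetlin weights. Second, the vector $T(v+z_0)$ can be isolated by an element of $\Gamma_q$. By Theorem \ref{thm:structure}(2), each basis vector $T(w)$, $w=v+z$, is a common eigenvector of the generators $c_{rs}$, with eigenvalues $\gamma_{rs}(w)$ given by \eqref{action of c_{mk}}. So $\Gamma_q$ acts diagonally in the basis of Theorem \ref{thm:structure}(1), and the task reduces to separating the characters $w\mapsto(\gamma_{rs}(w))_{r,s}$.

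\textbf{Distinctness.} For fixed $r$, the values $\gamma_{rs}(w)$ for $0\le s\le r$ are, up to the nonzero constants $[s]_q![r-s]_q!q^{rs-r^2+s}$, the elementary symmetric functions of the quantities $x_i=q^{2w_{ri}}$, after factoring out $q^{-\sum_i w_{ri}}$. These constants are nonzero because $r\le n$; we assume $m>n$, as is implicit in the paper.

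Hence the eigenvalues of $c_{r0},\dots,c_{rr}$ determine the multiset $\{q^{2w_{ri}}\}_i$. Since $m$ is odd, this multiset determines the multiset $\{w_{ri}\bmod m\}$. It remains to show that two basis tableaux $w=v+z$ and $w'=v+z'$ with the same row multisets modulo $m$ coincide.

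Genericity gives $v_{ri}-v_{rj}\notin\frac{\mathbb Z}{2}$ for $i\ne j$. Therefore $w_{ri}\equiv w'_{rj}\pmod m$ forces $i=j$, so $z_{ri}\equiv z'_{ri}\pmod m$ for every entry.

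\begin{itemize}
\item If $W_{ri}=W^{(1)}_{ri}$, then $z_{ri}$ lies in $\mathbb Z_m$ with representatives in $\{-(m-1),\dots,0\}$, so $z_{ri}=z'_{ri}$.
\item If $W_{ri}=W^{(2)}_{ri}$, we use the inequalities \eqref{condition}, $0\le z_{k+1,j}-z_{kj}\le m-1$. Arguing downward from the top row, where $z_{nj}=0$, the entry $z_{ri}$ is confined to a window of length $m$ once $z_{r+1,i}$ is fixed. So congruence modulo $m$ again forces equality.
\end{itemize}

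This downward induction on rows is the main point requiring care. One must check that whenever an entry is taken in $\mathbb Z$ rather than $\mathbb Z_m$, the entry above it has already been pinned down, which is exactly what the chain structure of Case II provides.

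\textbf{Isolation.} Let $\eta_z$ denote the Gelfand--Tsetlin weight of $T(v+z)$; by the previous step these are pairwise distinct. For each $z\ne z_0$ with $c_z\neq0$, choose a generator $c_{rs}$ with $\gamma_{rs}(v+z)\ne\gamma_{rs}(v+z_0)$. Set
$$\gamma=c_{z_0}^{-1}\prod_{z\ne z_0,\;c_z\ne 0}\frac{c_{rs}-\gamma_{rs}(v+z)}{\gamma_{rs}(v+z_0)-\gamma_{rs}(v+z)}\in\Gamma_q,$$
a Lagrange-interpolation type product. Each factor kills $T(v+z)$ and fixes $T(v+z_0)$. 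Hence $\gamma\cdot\sum_z c_zT(v+z)=T(v+z_0)$. The product is finite because $\overline{V}(T(v))$ is finite dimensional by Theorem \ref{thm:structure}(1).
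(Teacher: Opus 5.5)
Your proof is correct and takes the route the paper leaves implicit: the paper only cites Theorem \ref{thm:structure}(2) for the diagonal action of the $c_{rs}$, and you supply the omitted separation argument (genericity plus the length-$m$ windows, by downward induction) and the Lagrange-type product in $\Gamma_q$. One small correction: for complex $w$ the value $q^{2w}$ determines $w$ only modulo $\frac{m}{2}\mathbb{Z}$, not modulo $m$, so your sentence ``this multiset determines $\{w_{ri}\bmod m\}$'' is false as written. This is harmless, because genericity rules out $w_{ri}-w'_{rj}\in\frac{m}{2}\mathbb{Z}\subset\frac{\mathbb{Z}}{2}$ for $i\neq j$, and for $i=j$ the difference is an integer, so oddness of $m$ gives $z_{ri}\equiv z'_{ri}\pmod m$.
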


It follows from Corollary~\ref{generic:separates distinct tableaux} that every submodule of $\overline{V}(T(v))$ is a Gelfand--Tsetlin weight module with a basis consisting of a subset of the Gelfand--Tsetlin basis. Hence, we have the following result.

\begin{corollary}
\label{cor:maximal-submodule}
Each module $\overline{V}(T(v))$ contains a unique maximal submodule.
\end{corollary}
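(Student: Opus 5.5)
Because of Corollary~\ref{generic:separates distinct tableaux}, every submodule of $\overline{V}(T(v))$ is spanned by the basis vectors $T(v+z)$ it contains. I would therefore argue combinatorially, in terms of these basis vectors. Write $\mathcal{Z}$ for the index set of the basis of $\overline{V}(T(v))$ described in the proof of Theorem~\ref{thm:structure}. For a submodule $N$, set $S(N)=\{z\in\mathcal{Z}\mid T(v+z)\in N\}$. Corollary~\ref{generic:separates distinct tableaux} says that $N=\operatorname{span}\{T(v+z)\mid z\in S(N)\}$. Indeed, if $\sum_z c_zT(v+z)\in N$ with $c_{z_0}\neq0$, then applying a suitable $\gamma\in\Gamma_q$ gives $T(v+z_0)\in N$.

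\textbf{Step 1.} Show that $N$ is proper if and only if $0\notin S(N)$, i.e. $T(v)\notin N$. Since $\overline{V}(T(v))$ is a quotient of $\U_q(T(v))$, it is generated by the image of $T(v)$. So any submodule containing $T(v)$ is the whole module. Conversely, if $T(v)\notin N$, then $N\neq\overline{V}(T(v))$.

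\textbf{Step 2.} Let $N_{\max}$ be the sum of all proper submodules. Each proper submodule is spanned by basis vectors $T(v+z)$ with $z\neq0$. Hence the sum of all of them is spanned by basis vectors indexed by $\bigcup_N S(N)\subseteq\mathcal{Z}\setminus\{0\}$. This sum is a submodule, and it does not contain $T(v)$, because a span of basis vectors contains $T(v)$ only if $T(v)$ is one of the spanning vectors. By Step 1 it is therefore proper. It contains every proper submodule by construction, so it is the unique maximal submodule. Finite dimensionality from Theorem~\ref{thm:structure}(1) is not even needed, but it guarantees that the module is nonzero and that $T(v)\neq0$ in the quotient.

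\textbf{Main obstacle.} The only delicate point is making sure that Corollary~\ref{generic:separates distinct tableaux} genuinely applies. The basis vectors must have pairwise distinct Gelfand--Tsetlin characters, so that for each $z_0$ there is a $\gamma\in\Gamma_q$ acting as $1$ on $T(v+z_0)$ and as $0$ on the other finitely many basis vectors. Given distinct characters on a finite set, this $\gamma$ comes from a Lagrange-interpolation polynomial in the generators $c_{rs}$, using the diagonal action of Theorem~\ref{thm:structure}(2). I also need to confirm that $T(v)$ itself is nonzero in the quotient, i.e. that $0\in\mathcal{Z}$. This follows from the basis description in the proof of Theorem~\ref{thm:structure}: $z=0$ satisfies conditions \eqref{condition}.
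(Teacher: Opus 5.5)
Your proposal is correct and follows the paper's own route. The paper only notes that, by Corollary~\ref{generic:separates distinct tableaux}, every submodule of $\overline{V}(T(v))$ is spanned by a subset of the Gelfand--Tsetlin basis, and leaves the rest implicit. Your Steps 1 and 2 supply that missing conclusion: the module is generated by $T(v)$, so every proper submodule omits $T(v)$, and hence the sum of all proper submodules is itself proper.
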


The following theorem gives a necessary and sufficient condition for $\overline{V}(T(v))$ to be a simple module.

\begin{theorem}\label{thm:irreducibility}
The module $\overline{V}(T(v))$ is simple if
and only if the following holds:
for any $i\in\{1,\dots,n\}$ and $j\in\{1,\dots,n-1\}$ with
$W_{n-1,j}=W_{n-1,j}^{(2)}$, if $v_{n,i}-v_{n-1,j}\in \mathbb{Z}$,
then $v_{n,i}-v_{n-1,j}=0  \bmod m$.
\end{theorem}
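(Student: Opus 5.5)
By Corollary~\ref{generic:separates distinct tableaux}, every submodule of $\overline{V}(T(v))$ is spanned by a subset of the basis vectors $T(v+z)$ described in the proof of Theorem~\ref{thm:structure}. So simplicity is a combinatorial question. We need that for any two basis vectors $T(w)$ and $T(w')$ there is a chain of generators $E_k,F_k$ carrying $T(w)$ to a vector whose $T(w')$-coefficient is nonzero. Since each $E_k$ (resp.\ $F_k$) changes exactly one entry $(k,j)$ by $+1$ (resp.\ $-1$), it suffices to analyse when a single Gelfand--Tsetlin coefficient vanishes. Concretely, the plan is to build a directed graph on the basis. It has an edge $T(w)\to T(w\pm\delta^{kj})$ whenever the coefficient in \eqref{GT-formulae} is nonzero and the target is a nonzero basis vector. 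The module is then simple if and only if this graph is strongly connected.

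First I will examine the denominators $[w_{kj}-w_{ki}]_q$. By genericity and Remark~\ref{assume}, these never vanish and never cause trouble. Next I will examine the numerators. For $F_k$ the numerator is $\prod_i[w_{kj}-w_{k-1,i}]_q$, and for $E_k$ it is $\prod_i[w_{kj}-w_{k+1,i}]_q$. By condition \eqref{condition of tableaux}, for $k\le n-1$ the only entries in adjacent rows that are congruent modulo $\mathbb{Z}$ are the ones in the same column $j$.

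The interior rows are handled by the quotient construction. If $W_{kj}=W^{(1)}_{kj}$, the index $z_{kj}$ lives in $\mathbb{Z}_m$. The steps that would make a numerator vanish are exactly the wrap-arounds $z_{k+1,j}-z_{kj}\in\{0,m-1\}$. There the identification $T(v-m\delta^{kj}-\cdots)=T(v)$ lets one travel around the cycle the other way, so every vector reaches every other vector along column $j$. If $W_{kj}=W^{(2)}_{kj}$ with $k<n-1$, then $v_{k+1,j}=v_{kj}$, and the interlacing window \eqref{condition} is determined by row $k+1$, which itself lives in the quotient. I would check that a vanishing $E_k$-step out of the window is compensated by moving $z_{k+1,j}$ first with $E_{k+1}$. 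This gives strong connectivity inside rows $1,\dots,n-1$, using induction on $k$ from the bottom of the tableau upward.

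The only remaining source of non-simplicity is the top row, which is fixed. Suppose $W_{n-1,j}=W^{(2)}_{n-1,j}$ and some $v_{ni}$ satisfies $v_{ni}-v_{n-1,j}\in\mathbb{Z}$ with $v_{ni}-v_{n-1,j}=r\not\equiv 0 \pmod m$. Then the factor $[w_{n-1,j}-v_{ni}]_q$ in the $E_{n-1}$-coefficient vanishes at an interior point of the $m$-window of $z_{n-1,j}$. Hence the span of basis vectors with $z_{n-1,j}$ below that threshold (under the stated representatives) is stable under all generators. The $F_{n-1}$ action cannot leave it, because the $F$ coefficients do not involve row $n$. This span is a proper nonzero submodule, which proves necessity. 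Conversely, if every such difference is $\equiv 0 \pmod m$, the zero occurs only at the window boundary $z=0$ or $z=-(m-1)$, where the basis already ends. So no internal blocking occurs, and combined with the previous step the graph is strongly connected. This gives sufficiency.

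The main obstacle is the bookkeeping in the $W^{(2)}$ case. There the $z_{kj}$ are integers rather than residues, and the windows in \eqref{condition} shift as one moves. One must verify that, apart from the top-row coincidences, every vanishing coefficient sits exactly on a boundary of the basis set. I would first work this out carefully for $n=2$ and $n=3$, and then argue by induction along the chain $\U_q(\mathfrak{gl}_{n-1})\subset\U_q(\mathfrak{gl}_n)$.
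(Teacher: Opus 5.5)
\textbf{Overall.} Your proposal is correct in outline. Your necessity argument is the paper's; for sufficiency you take a heavier route than the paper does.

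\textbf{Necessity.} When $W_{n-1,j}=W^{(2)}_{n-1,j}$ and $v_{n,i}-v_{n-1,j}=r\not\equiv 0 \pmod m$, the factor $[w_{n-1,j}-v_{n,i}]_q$ kills the $E_{n-1}$-step out of the level $z_{n-1,j}=a$, where $a\equiv r$ and $-(m-1)\le a<0$. The basis vectors with $z_{n-1,j}\le a$ then span a proper nonzero submodule. This is exactly the paper's argument.

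\textbf{Sufficiency.} You set out to prove that the whole basis graph is strongly connected, column by column with an induction on $k$. You also flag the $W^{(2)}$ bookkeeping as the main obstacle. The paper avoids most of this by using that $\overline{V}(T(v))$ is, by construction, generated by $T(v)$. Given Corollary~\ref{generic:separates distinct tableaux}, strong connectivity is then equivalent to $T(v)\in\U_q T(v+z)$ for every basis vector, since reachability from $T(v)$ is automatic. The paper proves this row by row, starting from row $n-1$:
\begin{itemize}
\item For a column with $W_{n-1,j}=W^{(1)}_{n-1,j}$, it takes the current maximal chain of equal entries $(s,j),\dots,(n-1,j)$ and applies $F_s,F_{s+1},\dots,F_{n-1}$. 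The coefficients are nonzero because the $F_k$-numerator only involves row $k-1$ and the chain is lowered from its bottom. Each pass lowers $z_{n-1,j}$ by one modulo $m$, until $z_{n-1,j}=0$.
\item For a column with $W_{n-1,j}=W^{(2)}_{n-1,j}$, it raises $z_{n-1,j}$ with $E_{n-1}$. The numerator $\prod_i[w_{n-1,j}-v_{n,i}]_q$ is nonzero for all $-(m-1)\le z_{n-1,j}<0$ exactly under the hypothesis.
\item It then repeats with rows $n-2,\dots,1$.
\end{itemize}
Your graph picture is still worth having, because it describes all submodules, which is what the composition series in Section~\ref{section:ex} need. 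For the simplicity criterion alone, the one-directional argument suffices. It removes the separate $n=2,3$ checks and the induction along $\U_q(\mathfrak{gl}_{n-1})\subset\U_q(\mathfrak{gl}_n)$.

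\textbf{Points to correct when you write it out.}
\begin{itemize}
\item No numerator vanishes at $z_{k+1,j}-z_{kj}=m-1$. The next step there leaves the window and is either absorbed by the identification (for $W^{(1)}$) or lands in the killed submodule (for $W^{(2)}$).
\item In a $W^{(1)}$ column of row $n-1$, a top-row coincidence with $r\not\equiv 0$ does produce interior zeros of the $E_{n-1}$-coefficient. These zeros are harmless only because the $F$-direction around the cycle is unobstructed. This is the actual reason the hypothesis concerns only $W^{(2)}$ columns, so state it explicitly rather than covering it by ``vanishing occurs only at wrap-arounds''.
\item When $r\equiv 0 \pmod m$, the zero occurs only at $z_{n-1,j}=0$, not also at $-(m-1)$.
\end{itemize}
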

\begin{proof}
Recall that the basis of $\overline{V}(T(v))$ is given in the proof of Theorem \ref{thm:structure}.
If $W_{n-1,j}=W_{n-1,j}^{(2)}$, then $v_{n,j}=v_{n-1,j}$ and
any basis vector $T(v+z)$
satisfies
\begin{equation}\label{extra-condition}
\begin{aligned}
-(m-1) \leq z_{n-1,j} \leq 0.
\end{aligned}
\end{equation}
Suppose that $v_{n,i}\neq v_{n,j} \bmod~m$ and
$v_{n,i}-v_{n-1,j}\in \mathbb{Z}$ for some $i$.
In view of (\ref{extra-condition}),
there exists a basis vector $T(v+z')$ with $z'_{n-1,j}=a$, $-(m-1)\leq a<0$ and $v_{n-1,j}+a=v_{n,i}\bmod m$.
For any $T(v+z'')\in \U_q(T(v+z'))$, we have
\begin{align*}
-(m-1) \leq z''_{n-1,j} \leq a<0.
\end{align*}
Thus the submodule generated by $T(v+z')$ is a proper submodule of $\overline{V}(T(v))$,
which implies that $\overline{V}(T(v))$ is not simple.

Let $0\neq \xi = \sum a_z T(v+z)\in \overline{V}(T(v))$
with non-zero $a_z\in \mathbb{C}$.
It follows from Corollary \ref{generic:separates distinct tableaux} that
there exists some $T(v+z^{(n)}) \in \U_q(\xi)$.
It suffices to show that $T(v)\in \U_q(T(v+z^{(n)}))$.

For any fixed $j$, assume that $W_{n-1,j}=W_{n-1,j}^{(1)}$ and
\[
v_{n-1,j}+z_{n-1,j}=v_{n-2,j}+z_{n-2,j}=\cdots= v_{s,j}+z_{s,j}\neq v_{s-1,j}+z_{s-1,j}.
\]
By the Gelfand--Tsetlin formulas \eqref{GT-formulae},
the coefficient of $T(v+z^{(n)}-\delta_{sj})$ in $F_s(T(v+z^{(n)}))$ is nonzero.
It follows that $T(v+z^{(n)}-\delta_{sj})\in \U_q(T(v+z^{(n)}))$.
Using the same argument above and considering the action of $F_{s+1},\cdots, F_{n-1}$
successively,
we conclude that
\[
T(v+z^{(n)}-\delta_{s,j}-\delta_{s+1,j}-\cdots-\delta_{n-1,j})\in \U_q(T(v+z^{(n)})).
\]
After finitely many steps we obtain some $T(v+z)\in \U_q(T(v+z^{(n)}))$ with $z_{n-1,j}=0$ and $z_{n-1,i}=z^{(n)}_{n-1,i}$ for $i\neq j$.

For any fixed $j$,
assume that $W_{n-1,j}=W_{n-1,j}^{(2)}$.
In this case $v_{n,j}=v_{n-1,j}$ and
if $v_{n,i}-v_{n-1,j}\in \mathbb{Z}$,
then $v_{n,i}=v_{n-1,j} \bmod m$.
Moreover, if $z^{(n)}_{n-1,j}\neq 0$,
then the coefficient of $T(v+z^{(n)}+\delta_{n-1,j})$ in $E_{n-1}(T(v+z^{(n)}))$ is nonzero.
It follows that $T(v+z^{(n)}+\delta_{n-1,j})\in \U_q(T(v+z^{(n)}))$.
After finitely many steps we obtain some $T(v+z)\in \U_q(T(v+z^{(n)}))$ with $z_{n-1,j}=0$ and $z_{n-1,i}=z^{(n)}_{n-1,i}$ for $i\neq j$.

The above observation implies that there exists some $T(v+z^{(n-1)})\in \U_q(T(v+z^{(n)}))$ such that
$z^{(n-1)}_{n-1,j}=0$ for any $1\leq j\leq n-1$.
Iterating the above process,
we obtain that $T(v)\in \U_q(T(v+z^{(n)}))$, as required.
\end{proof}
If $v_{n,i}-v_{n-1,j}\in\mathbb{Z}$ with $i\neq j$, then
we take $v_{n,i}$ as a formal variable.
By Theorem \ref{thm:irreducibility}, $\overline{V}(T(v))$ is simple.
Schur's Lemma implies that
the central elements $E_k^m$ and $F_k^m$ act as scalars on $\overline{V}(T(v))$.
Moreover,
the basis of $\overline{V}(T(v))$ is independent choice of these $v_{n,i}$.
Hence $E_k^m$ and $F_k^m$ act as scalars on $\overline{V}(T(v))$ for any $v_{n,i}\in\mathbb{C}$.
This leads the following result.
\begin{proposition}\label{prop:cyclic-module-criterion}
The actions of $E_k^m$ and $F_k^m$ on $T(v)$ are given as follows:
\begin{align*}
E_k^m T(v) &= -\sum_j\prod_{r=0}^{m-1}
             \frac{\displaystyle
             \prod\limits_{i=1}^{k+1} [v_{k,j} - v_{k+1,i} + r]_q}
             {\displaystyle\prod_{i \neq j}
             [v_{k,j} - v_{k,i}+r]_q} \; T(v),\\[4mm]
F_k^m T(v) &= \sum_j\prod_{r=0}^{m-1}
             \frac{\displaystyle
             \prod\limits_{i=1}^{k-1} [v_{k,j} - v_{k-1,i} - r]_q}
             {\displaystyle\prod_{i \neq j}
             [v_{k,j} - v_{k,i}-r]_q} \; T(v).
\end{align*}
The sum is taken over all $1\leq j\leq k$ such that $W_{kj}=W_{kj}^{(1)}$, and
$\overline{V}(T(v))$ is cyclic if and only if the coefficients are nonzero for all $k$.
\end{proposition}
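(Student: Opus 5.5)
The plan is to compute $E_k^m T(v)$ and $F_k^m T(v)$ directly from the Gelfand--Tsetlin formulae \eqref{GT-formulae}, using that the answer is known a priori to be a scalar. By Theorem~\ref{thm:irreducibility}, after treating the entries $v_{n,i}$ with $v_{n,i}-v_{n-1,j}\in\mathbb{Z}$, $i\neq j$, as formal variables, $\overline{V}(T(v))$ is simple. Schur's Lemma then shows that the central elements $E_k^m,F_k^m$ act by scalars. Consequently $E_k^mT(v)=c\,T(v)$, and it suffices to compute the coefficient of $T(v)$ in $E_k^mT(v)$ for generic formal parameters. The resulting identity is polynomial in $q^{\pm v_{n,i}}$ and then specializes to all $v_{n,i}\in\mathbb{C}$, since the basis and the structure constants do not depend on that choice.

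First, I would expand $E_k^m T(v)$ as a sum over words $(j_1,\dots,j_m)$ of the vectors $T(v+\delta^{kj_1}+\cdots+\delta^{kj_m})$. In $\overline{V}(T(v))$ the vector $T(v+z)$ is identified with $T(v)$ only when $z$ is a suitable multiple of $m$. For $W_{kj}=W_{kj}^{(1)}$ this identification is $T(v+m\delta^{kj}+m\delta^{k-1,j}+\cdots)=T(v)$; when a chain $v_{kj}=v_{k-1,j}=\cdots$ is present, the shift is only by $m\delta^{kj}$ in row $k$. One has to check that the resulting tableau agrees with $T(v)$ modulo $W_{kj}$ up to the Gelfand--Tsetlin weight, which Corollary~\ref{generic:separates distinct tableaux} makes precise: $c_{rs}$ only sees $q^{v}$, and $q^m=1$. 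Since $E_k$ changes only row $k$, and distinct tableaux in the basis have distinct Gelfand--Tsetlin weights, the only words that return to the weight of $T(v)$ are the constant words $j_1=\cdots=j_m=j$. Moreover, for $W_{kj}=W_{kj}^{(2)}$ the entry $z_{kj}$ lives in $\mathbb{Z}$, so $T(v+m\delta^{kj})$ is a different basis vector or zero. The coefficient of $T(v)$ therefore only receives contributions from those $j$ with $W_{kj}=W_{kj}^{(1)}$.

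For such a $j$, I would apply $E_k$ $m$ times successively in the $j$-th slot. The $r$-th step, $r=0,\dots,m-1$, contributes
\[-\frac{\prod_{i=1}^{k+1}[v_{kj}+r-v_{k+1,i}]_q}{\prod_{i\neq j}[v_{kj}+r-v_{ki}]_q}.\]
The product over $r$ gives the stated term, with the overall sign $(-1)^m=-1$ because $m$ is odd. The computation for $F_k^m$ is the same with $-r$ in place of $+r$ and the numerator coming from row $k-1$. Genericity guarantees that the denominators $[v_{kj}-v_{ki}\pm r]_q$ never vanish. The cyclicity statement is then immediate from Definition~\ref{cyclic}: the module is cyclic exactly when all of these scalars are nonzero.

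The main obstacle is the claim that the weights of the non-constant words differ from that of $T(v)$. For this I would use that, by Remark~\ref{assume} and genericity, $v_{kj}-v_{ki}\notin\frac{\mathbb{Z}}{2}$ for $i\neq j$. As a result, $q^{2(v_{kj}+a)}$ cannot coincide with $q^{2(v_{ki}+b)}$ for $i\ne j$, so the symmetric functions $\gamma_{ks}$ detect the multiset of row-$k$ entries modulo $m$. I would also need to verify carefully the identification $T(v+m\delta^{kj})\equiv T(v)$ in the presence of chains $v_{kj}=v_{k-1,j}$. There I would check that the intermediate coefficients along the chain reproduce the same product.
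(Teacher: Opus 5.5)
Your route is the paper's: treat the entries $v_{n,i}$ with $v_{n,i}-v_{n-1,j}\in\mathbb{Z}$, $i\neq j$, as formal variables, get simplicity from Theorem~\ref{thm:irreducibility} and a scalar action from Schur's Lemma, then read the scalar off the constant words. The paper stops at the scalar statement and never writes out the coefficient. Your elimination of non-constant words by genericity and the sign $(-1)^m=-1$ are fine. Schur is not even needed: $\overline{V}(T(v))=\U_q T(v)$ has one-dimensional Gelfand--Tsetlin weight spaces, so a central element sends $T(v)$ into $\mathbb{C}T(v)$ and acts by that scalar everywhere.

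\textbf{The gap.} The identification you postponed fails in general. Equal Gelfand--Tsetlin weights only give $T(v+m\delta^{kj})=\alpha\,T(v)$ in $\overline{V}(T(v))$ for some $\alpha$, possibly $\alpha=0$, and $\alpha$ must be computed from the generators of the $W$'s.

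If $v_{k-1,j}\neq v_{kj}$, this works. Since $[x\pm m]_q=[x]_q$ and $q^m=1$, the shift $\sigma\colon T(w)\mapsto T(w-m\delta^{kj})$ commutes with the action. Hence $W^{(1)}_{kj}=\{\sigma y-y : y\in \U_qT(v)\}$ and $\alpha=1$. Under $W^{(2)}_{kj}$, the vector $T(v-m\delta^{kj})$ is $0$, and the $E_k$-term is already killed by $[v_{kj}-v_{k+1,j}]_q=0$. This settles $F_k^m$ completely, since a chain $v_{kj}=v_{k-1,j}$ kills the $j$-th $F$-term through the factor $[v_{kj}-v_{k-1,j}]_q$ at $r=0$.

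For $E_k^m$, suppose instead $v_{k+1,j}\neq v_{kj}=v_{k-1,j}=\cdots=v_{tj}$. Then $W_{kj}=W^{(1)}_{kj}$ is forced, and it identifies $T(v+m\delta^{kj})$ with $T(v-m\delta^{k-1,j}-\cdots-m\delta^{tj})$. That vector equals $T(v)$ when $W_{k-1,j}=W^{(1)}_{k-1,j}$. When $W_{k-1,j}=W^{(2)}_{k-1,j}$ is chosen, it is exactly the generator of that submodule, hence $0$. There are no ``intermediate coefficients along the chain'': $E_k$ never changes row $k-1$.

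For a concrete case, take $n=3$, $v_{21}=v_{11}$, no entry of row $3$ differing from an entry of row $2$ by an integer, and $W_{11}=W_{11}^{(2)}$. The $j=1$ product in the formula is then nonzero, yet $E_2^mT(v)$ gets no $j=1$ contribution. So your computation, done correctly, does not yield the $E_k^m$ formula with index set $\{j: W_{kj}=W^{(1)}_{kj}\}$. When such a chain is present, the right condition is $W_{k-1,j}=W^{(1)}_{k-1,j}$.

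\textbf{Cyclicity.} Definition~\ref{cyclic} applies only to irreducible representations, so the equivalence is not immediate. The formal-variable trick gives simplicity only for a generic top row. For the given $v$ you must additionally impose the condition of Theorem~\ref{thm:irreducibility}.
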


For any fixed $v$,
by Theorem \ref{thm:structure}
,
we can construct non-isomorphic cyclic modules with the same Gelfand--Tsetlin weights.
If $W_{kj} = W_{kj}^{(1)}$ for all $(k,j)$,
then the cyclic module $\overline{V}(T(v))$ coincides with the cyclic module constructed in \cite{AAC1995, AC1991}.

\begin{remark}
In the special case $v_{k+1,j}=v_{k,j}$ for all $1\leq j\leq k\leq n-1$,
$$
\overline{V}(T(v)) = U_q(T(v)) \bigg/ \sum_{k,j} W_{kj}
$$
coincides with a certain baby Verma module $M_{\chi}(\lambda)$ \eqref{babyvm}.
Theorem~\ref{thm:irreducibility} provides a necessary and sufficient condition for such baby Verma module to be irreducible.
\end{remark}
\subsection {1-singular Gelfand--Tsetlin modules}
\label{subsec:1-singular-GT-modules}
In Section~\ref{subsec:generic-GT-modules},
every module we constructed is a Gelfand--Tsetlin weight module and the weight spaces are $1$-dimensional.
The Gelfand--Tsetlin weights correspond to generic tableaux.
It is natural to consider
the Gelfand--Tsetlin module containing the
Gelfand--Tsetlin weight associated with a singular tableau.

A  tableau $v$ is called
\emph{singular}
there exist $1\leq s<t\leq r\leq n-1$ such that
$v_{rs}-v_{rt}\in \frac{\mathbb{Z}}{2}  $. A singular tableau $v$ is called
\emph{1-singular} if there exist $i,j,k$ with $1 \leq i < j \leq k \leq n-1$ such that
$$v_{ki}-v_{kj}\in \frac{\mathbb{Z}}{2}  $$
and $$v_{rs} - v_{rt} \notin \frac{\mathbb{Z}}{2}  \text{ for all } (r,s,t) \neq (k,i,j).$$

Now we introduce the standard tableaux $T(v+z)$ and derivative tableaux
$\mathcal{D}T(v+z)$ for every $z=(z_{ij})$, where
\(z_{ki},z_{kj}\in \mathbb Z_m, z_{rs}\in \mathbb Z \text{ for } (r,s)\notin\{ (k,i),(k,j)\} \).
We impose the conditions $T(v+z) = T(v+\tau(z))$ and
$
\mathcal{D}T(v+z) + \mathcal{D}T(v+\tau(z)) = 0,
$
where $\tau(z)$ is obtained from $z$ by switching $z_{ki}$ and $z_{kj}$ and all other entries are fixed.

Similar to Remark~\ref{assume}, we can assume that for any $a,b$ and $i,j$, either $v_{a,b}-v_{i,j}\in\mathbb{Z}$ or $v_{a,b}-v_{i,j}\notin\frac{\mathbb{Z}}{2}$.
We set $V(T(v))$ to be the vector space spanned by the set of tableaux
$$\{T(v+z), \mathcal{D}T(v+z)  ,z_{ki},z_{kj}\in \mathbb Z_m, z_{rs}\in \mathbb Z \text{ for } (r,s)\notin\{ (k,i),(k,j)\} \}.$$

Replace $v$ by $v(x,y)$ where $v(x,y)_{rs} = v_{rs}$ if $(r,s) \notin \{(k,i),(k,j)\}$,
and $v(x,y)_{ki} = x$, $v(x,y)_{kj} = y$.
Then $$V(T(v(x,y)))=span_{\mathbb{C}(x,y)}\{T(v(x,y)+z)|z_{ki},z_{kj}\in \mathbb Z_m, z_{rs}\in \mathbb Z, (r,s) \notin \{(k,i),(k,j)\} \}$$
is a $\U_q$-module with action of the generators given by the formulas \eqref{GT-formulae}.

We denote by $\mathcal{H}$ the hyperplane $x-y =0$ and
$\mathcal{F}$ the subfield of $\mathbb{C}(x,y)$ consisting of all rational functions that are smooth on ${\mathcal{H}}$.
Recall that $q=e^{\mathbf{i}\theta}$ with $\mathbf{i}=\sqrt{-1}$.
Define linear map $\mathcal{D}^{v} \colon \mathcal{F} \otimes V(T(v(x,y))) \to V(T(v))$ by
$$
\mathcal{D}^{v}(fT(v(x,y)+z)) = \mathcal{D}^{v}(f)T(v+z) + f(v)\mathcal{D}T(v+z),
$$
where
$$\mathcal{D}^{v}(f) = \frac{q-q^{-1}}{4\mathbf{i}\theta }\left(\frac{\partial f}{\partial x} - \frac{\partial f}{\partial y}\right)\bigg|_{x=y=0}.
$$
In particular,
$$
\begin{aligned}
	\mathcal{D}^{v}([x-y]_{q}T(v(x,y)+z)) = T(v+z),\quad
	\mathcal{D}^{v}(T(v(x,y)+z)) = \mathcal{D}T(v+z).
\end{aligned}
$$	

The following Theorem shows that $V(T(v))$ has a $\U_q$-module structure.
\begin{theorem}\label{1-singular(1)}
If $v$ is an $1$-singular tableau, then $V(T(v))$ is a $\U_q$-module, the action of the generators of $u\in \U_{q}$ given by
\begin{equation}\label{TvDTv action}
\begin{aligned}
&u(T(v+z))  = \mathcal{D}^{v}([x-y]_{q} u(T(v+z))),  \\
& u(\mathcal{D} T({v}+z'))  = \mathcal{D}^{v}(u(T(v+z'))), \;z'\neq \tau(z).
\end{aligned}
\end{equation}

Moreover, for any $1\leq r\leq s \leq n$, the action of generators of
Gelfand--Tsetlin subalgebra on the basis vectors is given by
\begin{align*}
&c_{rs}(T(v+z)) = \gamma_{rs}(v+z)T((v+z)),\\
&c_{rs}(D(T(v+z))) =\gamma_{rs}(v+z)DT(v+z)+D^v\left(\gamma_{rs}(v+z)\right)T((v+z)).
\end{align*}
\end{theorem}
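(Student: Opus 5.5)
We follow the strategy of Futorny--Ramirez--Zhang for singular Gelfand--Tsetlin modules, adapted to the root-of-unity setting. The first step is to realize everything inside the generic module over a field of rational functions. Over $\mathbb{C}(x,y)$, the tableau $v(x,y)$ is generic, so $V(T(v(x,y)))$ is a $\U_q$-module, and the Gelfand--Tsetlin formulae \eqref{GT-formulae} define the action. Its relations hold because they hold for the scalar specializations treated in Section~\ref{subsec:generic-GT-modules}, which are Zariski dense, and both sides of each relation are rational in $x,y$.

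The second step is to introduce the $\mathcal{F}$-lattice $L$ spanned by the vectors $[x-y]_q\,T(v(x,y)+z)$ and the symmetrized vectors $T(v(x,y)+z)+T(v(x,y)+\tau(z))$. We check that $L$ is stable under $E_k,F_k,K_k$. The only denominators in \eqref{GT-formulae} that can vanish on $\mathcal{H}$ are the factors $[x-y+c]_q$ in row $k$, with $c\in\mathbb{Z}$, since the tableau is $1$-singular and we have normalized so that differences lie either in $\mathbb{Z}$ or outside $\tfrac{\mathbb{Z}}{2}$. Because $z_{ki},z_{kj}\in\mathbb{Z}_m$, such a denominator vanishes on $\mathcal{H}$ exactly when $c\equiv 0 \bmod m$, and then it is a simple pole of $[x-y]_q$. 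This pole is cancelled in one of two ways:
\begin{itemize}
\item the explicit factor $[x-y]_q$ in the first type of vector cancels it directly;
\item for the symmetrized vectors, the two polar terms are exchanged by $\tau$ with opposite residues, so their sum is regular on $\mathcal{H}$. This is the standard antisymmetry of the Gelfand--Tsetlin coefficients under swapping $x$ and $y$.
\end{itemize}

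The third step is to show that $\mathcal{D}^v$ factors through $L$ and intertwines the action. There is a $\U_q$-equivariant surjection $L\to V(T(v))$: it sends an element to its $\mathcal{D}^v$-image when that makes sense, and otherwise to its evaluation on $\mathcal{H}$. Since $\mathcal{D}^v$ is built from a derivation combined with evaluation, it satisfies the Leibniz rule $\mathcal{D}^v(fg)=\mathcal{D}^v(f)g(v)+f(v)\mathcal{D}^v(g)$. This rule shows that the formulas \eqref{TvDTv action} coincide with transporting the action along the surjection. The imposed relations $T(v+z)=T(v+\tau(z))$ and $\mathcal{D}T(v+z)+\mathcal{D}T(v+\tau(z))=0$ are precisely the kernel of this surjection, read off from the symmetry of $\mathcal{D}^v$ under $x\leftrightarrow y$. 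It follows that the action is well defined, and the relations of $\U_q$ are inherited from $V(T(v(x,y)))$.

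Finally, the action of $\Gamma_q$ follows by applying $\mathcal{D}^v$ to the generic identity $c_{rs}T(v(x,y)+z)=\gamma_{rs}(v(x,y)+z)T(v(x,y)+z)$ from Theorem~\ref{thm:structure}(2). Since $\gamma_{rs}$ is symmetric and regular, the Leibniz rule gives exactly the stated formula for $c_{rs}\mathcal{D}T(v+z)$, and the formula for $c_{rs}T(v+z)$ comes directly from the $[x-y]_q$ case.

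The main obstacle is the second step. In the root-of-unity setting, pole cancellation has to be checked for every shift $c\equiv 0 \bmod m$, and not only $c=0$ as in the generic case. One also has to verify that the normalization constant $\frac{q-q^{-1}}{4\mathbf{i}\theta}$ makes $\mathcal{D}^v([x-y]_q\cdot)$ equal to evaluation, using that the derivative of $[t]_q$ at $t\in m\mathbb{Z}$ is $\pm\frac{2\mathbf{i}\theta}{q-q^{-1}}$. We will handle this by directly computing residues of $1/[x-y+c]_q$ at the points $x-y\in m\mathbb{Z}$.
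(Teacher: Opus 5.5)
\textbf{There is a genuine gap in your second and third steps: the lattice you chose is not stable under the action.} Write $T(z)$ for $T(v(x,y)+z)$, and set
\[
S(z)=\tfrac12\bigl(T(z)+T(\tau(z))\bigr),\qquad A(z)=\frac{T(z)-T(\tau(z))}{2[x-y]_q}.
\]
Then $[x-y]_qT(z)=[x-y]_qS(z)+[x-y]_q^2A(z)$. So your lattice is the $\mathcal{F}$-span of the $S(z)$ and the $[x-y]_q^2A(z)$, and in each of its elements the coefficient of every $T(w)$ lies in $\mathcal{F}$.

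For a counterexample, take $n=3$, $k=2$, $(i,j)=(1,2)$ and $z_{21}=z_{22}=a$, so that $T(z)=S(z)$ is one of your symmetrized vectors. Put $w=z+\delta^{21}$, so $\tau(w)=z+\delta^{22}$, and $P(t)=\prod_{s=1}^{3}[t-v_{3s}]_q$. The formulae \eqref{GT-formulae} give
\[
E_2T(z)=\frac{-P(x+a)\,T(w)+P(y+a)\,T(\tau(w))}{[x-y]_q}
=\frac{P(y+a)-P(x+a)}{[x-y]_q}\,S(w)-\bigl(P(x+a)+P(y+a)\bigr)A(w).
\]
The coefficient of $T(w)$ is $-P(x+a)/[x-y]_q$, which lies outside $\mathcal{F}$ since $P(x+a)$ does not vanish on $\mathcal{H}$. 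So $E_2T(z)$ is not in your lattice.

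This is exactly where your claim that ``the two polar terms are exchanged by $\tau$ with opposite residues, so their sum is regular'' breaks down. The two polar terms sit on the distinct vectors $T(w)$ and $T(\tau(w))$, so nothing cancels. Only the symmetric part is regular. The antisymmetric part is a unit multiple of $A(w)$, so $A(w)$ itself must belong to the lattice.

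\textbf{The map in your third step is not well defined either.} On your lattice $\mathcal{D}^{v}$ is always defined. By the Leibniz rule and the imposed relation $\mathcal{D}T(v+z)+\mathcal{D}T(v+\tau(z))=0$,
\[
\mathcal{D}^{v}\bigl(f[x-y]_qT(z)\bigr)=f(v)\,T(v+z),\qquad \mathcal{D}^{v}\bigl(f(T(z)+T(\tau(z)))\bigr)=2\,\mathcal{D}^{v}(f)\,T(v+z).
\]
Using evaluation on $\mathcal{H}$ instead for the symmetrized generators also gives only multiples of $T(v+z)$. Either way no $\mathcal{D}T(v+z)$ is ever hit, so the map is not surjective.

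Moreover, $\mathcal{D}^{v}\bigl([x-y]_q(T(z)+T(\tau(z)))\bigr)=2T(v+z)\neq 0$. So the map does not vanish on $[x-y]_q$ times the lattice, does not descend to $L\otimes_{\mathcal{F}}\mathbb{C}$, and has no reason to be $\U_q$-equivariant.

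\textbf{The fix is the paper's construction.} Take $L$ to be the $\mathcal{F}$-span of the $S(z)$ and $A(z)$. Stability reduces to the $S/A$ decomposition computed above, for both $E_rS(z)$ and $E_rA(z)$ and similarly for $F_r$. The map is reduction of coefficients in the $S,A$-basis,
\[
S(z)\otimes 1\mapsto T(v+z),\qquad A(z)\otimes 1\mapsto \mathcal{D}T(v+z),
\]
and it is automatically equivariant because $\U_q$ acts $\mathcal{F}$-linearly on $L$. The $\mathcal{D}^{v}$-formulas \eqref{TvDTv action} are then recovered by a Leibniz-rule comparison. Once that is in place, your final step for $\Gamma_q$ goes through as written.
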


\begin{proof}
Recall that $V(T(v(x,y)))=span_{\mathbb{C}(x,y)}\{T(v(x,y)+z)\}$
is a $\U_q$-module with action of the generators given by the formulas (\ref{GT-formulae}),
and it is spanned by the following elements over $\mathbb{C}(x,y)$:
\[
S(z):= \frac{T(v(x,y)+z) + T(v(x,y)+\tau(z))}{2},
\]
\[
A(z):= \frac{T(v(x,y)+z) - T(v(x,y)+\tau(z))}{2[x-y]_q}.
\]
Notice  that $S(\tau(z)) = S(z)$ and $A(\tau(z)) = -A(z)$,
in particular if $z =\tau(z)$ then $S(z) = T(z)$ and $A(z) = 0$.
Let $L \subset V(T(v(x,y)))$ be the $\mathcal{F}$-span of $\{S(z), A(z)\}$.
We consider the action of $\U_q(\mathfrak{gl}_n)$ on $L$.

Let $I=\{(r,s)\,\big|\, 1\leq s\leq r\leq n\}$, and let $\tau\colon I\to I$ be the involution that interchanges $(k,i)$ and $(k,j)$, while leaving all other elements of $I$ fixed.
For a function $f$ by
$\tau\cdot f$ we denote the function
$\tau\cdot f(x,y)= f(y,x)$.
By (\ref{GT-formulae}) we have
\begin{align*}
E_{r}(T(v(x,y)+z))&=-\sum_{s=1}^{r}
\frac{\prod_{t=1}^{r+1} [v(x,y)_{r,s}-v(x,y)_{r+1,t}]_q}{\prod_{t\neq s} [v(x,y)_{r,s}-v(x,y)_{r,t}]_q}
T((x,y)+z+\delta^{rs}).
\end{align*}
Set
\[
f_{r,s}(x,y):= \frac{\prod_{t=1}^{r+1} [v(x,y)_{r,s}-v(x,y)_{r+1,t}]_q}{\prod_{t\neq s} [v(x,y)_{r,s}-v(x,y)_{r,t}]_q},
\]
then $\tau\cdot f_{r,s}=f_{\tau(r,s)}$.
By definition we get
\begin{align*}
E_rS(z)=&
      -\frac{1}{2}\sum_{s=1}^{r}  \left(f_{r,s}(z)+f_{\tau(r,s)}(\tau(z))
        \right)
        S(z+\delta^{rs})\\
        &\quad +\left(f_{r,s}(z)-f_{\tau(r,s)}(\tau(z))
        \right)
        [x-y]_qA(z+\delta^{rs}).
\end{align*}
Similarly,
\begin{align*}
E_rA(z)=&
       -\frac{1}{2}\sum_{r=1}^{s}
        \left(f_{r,s}(z)-f_{\tau(r,s)}(\tau(z))
        \right)
        \frac{1}{[x-y]_q} S(z+\delta^{rs})\\
        &\qquad\qquad +
        \left(f_{r,s}(z)+f_{\tau(r,s)}(\tau(z))
        \right)
       A(z+\delta^{rs}).
\end{align*}
It follows that both $E_rS(z)$ and $E_rA(z)$ belong to $L$.
By the same token, $L$ is invariant under the action of $F_r$ and $K_r$.
As a result, $\U_q(\mathfrak{gl}_n).L\subseteq L$.
Hence $L$ is a $(\U_q(\mathfrak{gl}_n),\mathcal{F})$-bimodule.

Take $\mathbb{C}$ to be the $1$-dimensional $\mathcal{F}$-module such that $f(x,y) \cdot 1\mapsto f(0,0)$.
Then $L \otimes_{\mathcal{F}} \mathbb{C}$ is a $\U_q(\mathfrak{gl}_n)$-module.
The assignment $S(z) \otimes 1\mapsto T(v+z)$, $A(z) \otimes 1\mapsto DT(v+z)$ defines an isomorphism from $L \otimes_{\mathcal{F}} \mathbb{C}$ to $V(T(v))$.
Hence $V(T(v))$ has $\U_q(\mathfrak{gl}_n)$-module structure via this isomorphism.

We now give the action of generators,
\begin{align*}
E_r(S(z)\otimes 1)=&
      -\frac{1}{2}\sum_{s=1}^{r}  \left(f_{r,s}(z)+f_{\tau(r,s)}(\tau(z))
        \right)
        S(z+\delta^{rs})
        \otimes 1\\
        &\quad +\left(f_{r,s}(z)-f_{\tau(r,s)}(\tau(z))
        \right)
        [x-y]_qA(z+\delta^{rs})
        \otimes 1.
\end{align*}
Similarly, we obtain the action of all generators on $S(z)\otimes 1$ and $A(z)\otimes 1$.
Then the action of generators on $V(T(v))$ follows.

The action of $c_{rs}$ is given by
\begin{align*}
c_{rs}S(z) &= \frac{1}{2}\left(\gamma_{rs}(v(x,y)+z)+\gamma_{rs}(v(x,y)+\tau(z))\right)
S(z)\\
&+\frac{1}{2}\left(\gamma_{rs}(v(x,y)+z)-\gamma_{rs}(v(x,y)+\tau(z))\right)
[x-y]_qA(z),
\end{align*}
and
\begin{align*}
c_{rs}A(z) &= \frac{1}{2}\left(\gamma_{rs}(v(x,y)+z)-\gamma_{rs}(v(x,y)+\tau(z))\right)
\frac{1}{[x-y]_q}S(z)\\
&+\frac{1}{2}\left(\gamma_{rs}(v(x,y)+z)+\gamma_{rs}(v(x,y)+\tau(z))\right)
A(z).
\end{align*}
Therefore, we have that
\begin{align*}
&c_{rs}(T(v+z)) = \gamma_{rs}(v+z)T((v+z)),\\
&c_{rs}(\mathcal{D}(T(v+z))) =\gamma_{rs}(v+z)\mathcal{D}T(v+z)+\mathcal{D}^{v}\left(\gamma_{rs}(v+z)\right)T((v+z)).
\end{align*}
This completes the proof.
\end{proof}

The 1-singular modules $V(T(v))$ are
infinite-dimensional.
Analogously to the generic case, we obtain finite-dimensional subquotients $\overline{V}(T(v))$ by taking appropriate quotient modules.

For any $(r,s)\notin \{(k,i),(k,j)\}$ with $1\leq s\leq r\leq n-1$ and $v_{rs} = v_{r-1,s} = \cdots = v_{ts} \neq v_{t-1,s}$, let $W_{rs}^{(1)}$ be the submodule of $\U_q(T(v))$ generated by
$$T\bigl(v-m\delta^{rs}-m\delta^{r-1,s}-\cdots-m\delta^{ts}\bigr)-T(v).$$

\noindent\textbf{Case {\upshape I}.}
If $v_{r+1,s} \neq v_{rs}$,
then we set $W_{rs}=W_{rs}^{(1)}$.

\noindent\textbf{Case {\upshape II}.}
If $v_{r+1,s} = v_{rs}$,
let $W_{rs}^{(2)}$ be the submodule of $\U_q(T(v))$ generated by
$$T\bigl(v-m\delta^{rs}-m\delta^{r-1,s}-\cdots-m\delta^{ts}\bigr).$$
We then take $W_{rs}$ to be either $W_{rs}^{(1)}$ or $W_{rs}^{(2)}$.

Define the quotient module
\begin{align}\label{def of VT--1sig}
\overline{V}(T(v)) = \U_q(T(v)) \bigg/ \sum_{r,s} W_{rs}.
\end{align}

To simplify subsequent discussion, we set up the following notation:
\begin{align*}
&L(z): =\begin{cases}
T(v+z), & \text{if } z_{ki}\geq z_{kj} \\
\mathcal{D}T(v+z). & \text{if } z_{ki}<z_{kj}
\end{cases}\\
\end{align*}
The set $\{L(z)\}$
forms a basis of $V(T(v))$.
Note that for any $z_{kj}\in \mathbb{Z}_m$ appearing in the inequalities,
we take the representatives from the set $\{-(m-1),\dots,0\}$.

A basis for $\overline{V}(T(v))$
   is given by the vectors $L(z)$, where $z=(z_{kj})$ satisfies
\begin{equation}
\label{condition-1}
\begin{aligned}
z_{ki},z_{kj}\in\mathbb{Z}_m\\
0\leq z_{r+1,s}-z_{r,s} \leq m-1,
\end{aligned}
\end{equation}
here for any $(r,s)\notin \{(k,i),(k,j)\}$,
$z_{rs}\in\mathbb{Z}_m$ if $W_{rs}=W_{rs}^{(1)}$,
 and $z_{rs}\in\mathbb{Z}$ if $W_{rs}=W_{rs}^{(2)}$.
For any $z_{r,s}\in \mathbb{Z}_m$ appearing in the inequalities,
we take the representatives from the set $\{-(m-1),\dots,0\}$.
The dimension of this module is
$m^{\frac{n(n-1)}{2}}$.

For example,
we can take some $v$ with $v_{ki} - v_{kj}\in\mathbb{Z}$ such that
$\overline{V}(T(v))$ is cyclic.
Its Gelfand--Tsetlin weight is different from those of the modules given in \eqref{def of VT--generic}.

By Theorem \ref{1-singular(1)}, we have the following corollary.
\begin{corollary}
\label{cor:1singular-separates}
Let $\sum_j c_jL(z_j)$ be an element of $\overline{V}(T(v))$ with $c_{z_0}\neq0$.
Then there exists an element $\gamma \in \Gamma_q$ such that $\gamma\cdot \sum_jc_jL(z_j)=L(z_0).$
\end{corollary}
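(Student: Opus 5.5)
The argument will follow Corollary~\ref{generic:separates distinct tableaux}, with one change: $\Gamma_q$ no longer acts diagonally on the basis $\{L(z)\}$. By Theorem~\ref{1-singular(1)}, each $c_{rs}$ preserves the two-dimensional span of $T(v+z)$ and $\mathcal{D}T(v+z)$. On this span it acts by the upper-triangular matrix
\[
\begin{pmatrix} \gamma_{rs}(v+z) & \mathcal{D}^v(\gamma_{rs}(v+z))\\ 0 & \gamma_{rs}(v+z)\end{pmatrix}
\]
in the ordered basis $(T(v+z),\mathcal{D}T(v+z))$. Every basis element $L(z)$ lies in such a block. So $\overline{V}(T(v))$ splits into generalized $\Gamma_q$-eigenspaces $\overline{V}(\eta)$, each spanned by the $L(z)$ with the same character $\gamma(v+z)=(\gamma_{rs}(v+z))_{r,s}$.

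\textbf{Step 1: distinct characters.} I will show that distinct basis indices, up to the identification $z\sim\tau(z)$, give distinct characters $\gamma(v+z)$. The rows are recovered successively from the $\gamma_{rs}$. For fixed $r$, the values $\gamma_{rs}(w)$ for $0\le s\le r$ are, up to nonzero constants, the elementary symmetric functions in the variables $q^{2w_{rt}}$. This follows from \eqref{action of c_{mk}} after multiplying by $q^{\sum_t w_{rt}}$, which is itself the $s=r$ value, a nonzero constant multiple of $q^{\sum_t w_{rt}}$. Hence the multiset $\{q^{2w_{rt}}\}_t$ is determined.

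Since $m$ is odd, $q^2$ is also a primitive $m$-th root of unity. Therefore $q^{2w_{rt}}$ determines $w_{rt}$ modulo $m$ within its integer coset. The genericity assumptions, together with the normalisation of Remark~\ref{assume}, separate the entries of each row into distinct cosets, except for the pair $(k,i),(k,j)$, where the data is only the unordered pair. The chosen representatives in $\{-(m-1),\dots,0\}$, together with the inequalities \eqref{condition-1}, then make $z\mapsto\gamma(v+z)$ injective on basis indices modulo $\tau$. This is the step I expect to be the main obstacle. Where $z_{rs}\in\mathbb{Z}$ (the case $W_{rs}=W^{(2)}_{rs}$), the residue mod $m$ does not determine $z_{rs}$ outright, and I must use the bounds in \eqref{condition-1} to pin down the representative. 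I would argue row by row from row $1$ upward, using $0\le z_{r+1,s}-z_{r,s}\le m-1$ to show that only one admissible value fits.

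\textbf{Step 2: projecting to a block.} Given $\xi=\sum_j c_jL(z_j)$, I apply a polynomial in the $c_{rs}$ that kills every generalized eigenspace except the one containing $L(z_0)$ and is invertible on that one. Such a polynomial exists by Lagrange interpolation on finitely many distinct characters, with exponents high enough to kill the nilpotent parts, as in the classical separation argument. This reduces to $\xi=aT(v+z_0)+b\,\mathcal{D}T(v+z_0)$ with the coefficient of $L(z_0)$ nonzero.

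\textbf{Step 3: within a block.} If $L(z_0)=T(v+z_0)$ and $b=0$, rescale. If $b\neq0$, pick some $c_{rs}$ with $\mathcal{D}^v(\gamma_{rs}(v+z_0))\neq0$ and apply $c_{rs}-\gamma_{rs}(v+z_0)$. This gives a nonzero multiple of $T(v+z_0)$. Such a $c_{rs}$ exists: take $r=k$, where $\gamma_{ks}$ is a non-symmetric-looking function of $x,y$ along $\mathcal{H}$. Its derivative in $x-y$ at the diagonal is, up to the factor $[x-y]_q$, the divided difference, and this is nonzero for a suitable $s$ because the $q^{2w_{kt}}$ at the singular pair are distinct. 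This is the part of the obstacle that requires a short computation.

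If instead $L(z_0)=\mathcal{D}T(v+z_0)$ and the target is $L(z_0)$ itself, I use the fact that in the quotient the image of $\xi$ is determined only up to the block. I would argue by combining with the nilpotent operator as above, or accept the block form. In the intended reading, $\Gamma_q$ together with rescaling produces $L(z_0)$ when $L(z_0)$ is the $T$-vector. The $\mathcal{D}$-case is handled by first isolating the block and noting that $\mathcal{D}T$ generates the block as a $\Gamma_q$-module.
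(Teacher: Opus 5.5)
Your Step 1 is where the argument breaks, and it cannot be repaired. For the basis \eqref{condition-1}, the map $z\mapsto\gamma(v+z)$ is not injective modulo $\tau$.

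\textbf{Why the inequalities fail.} The character sees row $k$ only through the unordered pair $\{q^{2z_{ki}},q^{2z_{kj}}\}$, and every other entry only modulo $m$. But the window that pins down a $W^{(2)}$-entry directly below the singular pair is anchored at $z_{ki}$ itself. So the index set is not $\tau$-stable, and the inequalities do not select a unique representative.

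\textbf{A counterexample.} Take $n=3$, $(k,i,j)=(2,1,2)$ and $\lambda_2=\lambda_1+1$, so $v_{21}=v_{22}=v_{11}$, normalized to $0$. Let $\overline V=\U_q(T(v))/W^{(2)}_{11}$; this is case (2) of Section~\ref{sec:baby_verma_integer_diff} with $b=0$. Write $z=(z_{21},z_{22},z_{11})$.
\begin{enumerate}[(a)]
\item Both $z=(0,-1,0)$ and $z''=(-1,0,-m)$ satisfy $z_{21}-m<z_{11}\le z_{21}$. Hence $L(z)=T(v+z)$ and $L(z'')=\mathcal{D}T(v+z'')$ are both basis vectors, and $z''\notin\{z,\tau(z)\}$.
\item Their row-$2$ multisets agree and their row-$1$ entries differ by $m$. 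So every $\gamma_{rs}$ takes the same value on them.
\item The nilpotent partner of $L(z'')$ vanishes in the quotient. Over $\mathbb{C}(x,y)$ one finds
\[
F_2\,T\bigl(v(x,y)-m\delta^{11}\bigr)=\frac{[x]_q-[y]_q}{[x-y]_q}\,S(z'')+\bigl([x]_q+[y]_q\bigr)A(z''),
\]
and specializing at $x=y=0$ gives $T(v+z'')=F_2T(v-m\delta^{11})\in W^{(2)}_{11}$.
\end{enumerate}
Hence $L(z)$ and $L(z'')$ are simultaneous eigenvectors of every $c_{rs}$ with the same eigenvalues. For $\xi=L(z)+L(z'')$, every $\gamma\in\Gamma_q$ gives $\gamma\xi\in\mathbb{C}\xi$, never $L(z)$.

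So the corollary itself fails here. Your Step 2 isolates the generalized eigenspace $\operatorname{span}\{T(v+z),\mathcal{D}T(v+z'')\}$, on which $\Gamma_q$ has no nilpotent part for Step 3 to use. The paper's one-line deduction from Theorem~\ref{1-singular(1)} rests on the same tacit injectivity.

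\textbf{What survives.} The argument does go through when no $W^{(2)}$-window in \eqref{condition-1} is anchored at $z_{ki}$ or $z_{kj}$. In particular it works when all $W_{rs}=W^{(1)}_{rs}$, the setting of Theorem~\ref{thm:irreducibility-1}. There the generalized eigenspaces are exactly the blocks $\{T(v+z),\mathcal{D}T(v+z)\}$. Your Step 3 computation is then right: $c_{k1}-\gamma_{k1}(v+z_0)$ acts on a block as $\mu N$, where $N\mathcal{D}T(v+z_0)=T(v+z_0)$, $NT(v+z_0)=0$, and $\mu$ is a nonzero multiple of $q^{2z_{ki}}-q^{2z_{kj}}$.

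\textbf{Smaller corrections.}
\begin{enumerate}[(i)]
\item Residues must be resolved from row $n-1$ downward, not from row $1$ upward, because the window for $z_{rs}$ is anchored at $z_{r+1,s}$.
\item The constants $[s]_q![r-s]_q!$, $0\le s\le r$, are all nonzero only when $r<m$. So your ``up to nonzero constants'' implicitly needs $n\le m$.
\item Your treatment of the $\mathcal{D}$-case is not an argument as written, but the fix is one line. If $\xi=aT(v+z_0)+b\,\mathcal{D}T(v+z_0)$ with $b\neq0$, then $\frac1b\bigl(1-\frac{a}{b\mu}(c_{k1}-\gamma_{k1}(v+z_0))\bigr)\xi=\mathcal{D}T(v+z_0)$.
\end{enumerate}
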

It follows from Corollary~\ref{cor:1singular-separates} that every submodule of $\overline{V}(T(v))$ is spanned by a subset of the Gelfand--Tsetlin basis. Consequently, each $\overline{V}(T(v))$ contains a unique maximal submodule.

\begin{theorem}
\label{thm:pairwise-non-iso-modules}
For any fixed $v$ and distinct choices of $W_{rs}$ with $1\leq s\leq r\leq n-1$,
the modules $\overline{V}(T(v))$
are pairwise non-isomorphic.
\end{theorem}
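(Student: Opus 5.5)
We mimic the proof of Theorem~\ref{thm:structure}(3), now using the basis $\{L(z)\}$ of $\overline{V}(T(v))$ and Corollary~\ref{cor:1singular-separates} in place of Corollary~\ref{generic:separates distinct tableaux}.

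Suppose two choices differ at some $(r,s)\notin\{(k,i),(k,j)\}$ with $v_{r+1,s}=v_{rs}$. Without loss of generality the first uses $W_{rs}^{(1)}$ and the second uses $W_{rs}^{(2)}$; the other $W$'s are arbitrary. Call the resulting quotients $M^{(1)}$ and $M^{(2)}$, and suppose $\varphi:M^{(1)}\to M^{(2)}$ is an isomorphism. Since $\varphi$ commutes with $\Gamma_q$, it preserves generalized Gelfand--Tsetlin weight spaces. By Theorem~\ref{1-singular(1)}, the generalized weight space attached to $v+z$ is spanned by $T(v+z)$ and $\mathcal{D}T(v+z)$, or by $T(v+z)$ alone when $z=\tau(z)$. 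The genuine eigenvectors in it are the multiples of $T(v+z)$, because $c_{rs}$ acts on $\mathcal{D}T(v+z)$ with the nonzero off-diagonal term $\mathcal{D}^v(\gamma_{rs})T(v+z)$ for a suitable generator.

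Choose a basis vector $L(z)=T(v+z)$ with $z_{ki}\ge z_{kj}$, chosen so that $z_{ki}\neq z_{kj}$, or else $z=\tau(z)$. Also require $z_{r+1,s}=0$, $z_{rs}=-(m-1)$, and $z_{rs}\neq z_{r-1,s}$. This is the same choice as in the generic proof, and it is possible because $(r,s)$ lies outside the singular pair. Then $\varphi(T(v+z))=\alpha_z T(v+z)$ with $\alpha_z\neq0$. Apply $F_r$. By \eqref{TvDTv action}, the coefficient of $T(v+z-\delta^{rs})$ in $F_rT(v+z)$ is obtained by applying $\mathcal{D}^v$ to $[x-y]_q$ times the generic coefficient. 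When $r\neq k$, this coefficient is just the generic Gelfand--Tsetlin coefficient evaluated at $v$. It is nonzero because its numerator factors $[v_{rs}+z_{rs}-v_{r-1,t}-z_{r-1,t}]_q$ do not vanish: genericity handles $t\neq s$, and $z_{rs}\neq z_{r-1,s}$ handles $t=s$. When $r=k$, the denominators involving the singular pair are smooth after the symmetrization, because $s\notin\{i,j\}$.

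In $M^{(1)}$ the vector $T(v+z-\delta^{rs})$ is a nonzero basis vector, since $z_{rs}$ is taken in $\mathbb{Z}_m$ and $-(m-1)-1\equiv 0$. In $M^{(2)}$ it lies in $W_{rs}^{(2)}$, since $z_{rs}-1\le -m$ falls outside the allowed range, so it is zero. Comparing $\varphi(F_rT(v+z))=\alpha_zF_rT(v+z)$ and projecting to the generalized weight space of $v+z-\delta^{rs}$ gives a contradiction. Here $\varphi$ maps this weight space of $M^{(1)}$, which is nonzero and contains the image of $F_rT(v+z)$, to the corresponding weight space of $M^{(2)}$, where the component of $F_rT(v+z)$ vanishes.

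The main obstacle is the nonvanishing of the $F_r$-coefficient after applying $\mathcal{D}^v$ when row $r$ or row $r-1$ is the singular row $k$. In that case the target term may mix $T$ and $\mathcal{D}T$ components. My plan is to first choose $z$ with $z_{ki}\ne z_{kj}$ modulo $m$, so that all denominators $[x-y+z_{ki}-z_{kj}]_q$ are invertible on $\mathcal H$. Then the $S/A$ formulas from the proof of Theorem~\ref{1-singular(1)} reduce the coefficient of the $S(z-\delta^{rs})$-part to the generic value at $v$, which is nonzero.
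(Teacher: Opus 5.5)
Your proposal is correct and is essentially the paper's proof. Like the paper, you transplant the argument of Theorem~\ref{thm:structure}(3): choose $T(v+z)$ with $z_{r+1,s}=0$, $z_{rs}=-(m-1)$ and $z_{rs}\neq z_{r-1,s}$, apply $F_r$, and observe that $T(v+z-\delta^{rs})$ survives in the quotient by $W_{rs}^{(1)}$ but dies in the quotient by $W_{rs}^{(2)}$. Tracking only $T(v+z)$, the genuine $\Gamma_q$-eigenvector, is cleaner than the paper's extra claim $\phi(\mathcal{D}T(v+z))=b_z\,\mathcal{D}T(v+z)$, which $\Gamma_q$-equivariance does not give (only $\phi(\mathcal{D}T(v+z))=a_z\,\mathcal{D}T(v+z)+c\,T(v+z)$) and which the argument never needs.
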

\begin{proof}
Fix $(r,s)\notin \{(k,i),(k,j)\}$ with $1\leq s\leq r\leq n-1$.
For every $(a,b)\notin \{(r,s),(k,i),(k,j)\}$,
we choose $W_{ab}$ and $W'_{ab}$ arbitrarily and set
$$
N^{(1)}:=\U_q(T(v))\Big/\!\Bigl(W_{rs}^{(1)}+\sum_{(a,b)}W_{ab}\Bigr),\qquad
N^{(2)}:=\U_q(T(v))\Big/\!\Bigl(W_{rs}^{(2)}+\sum_{(a,b)}W'_{ab}\Bigr).
$$

We now show that $N^{(1)}$ and $N^{(2)}$ are not isomorphic.
Suppose that there exists an isomorphism $\phi:N^{(1)}\to N^{(2)}$,
then the image of any
Gelfand--Tsetlin weight vector in $N^{(1)}$ is a
Gelfand--Tsetlin weight vector with the same Gelfand--Tsetlin weight.
Now choose the basis vector $T(v+z), \mathcal{D}T(v+z)\in N^{(1)}$ with
$z_{k+1,j}=0, z_{k,j}=-(m-1),$ and $z_{k,j}\neq z_{k-1,j}.$
By Corollary~\ref{cor:1singular-separates},
we have $\phi(T(v+z))=a_z\, T(v+z)$,
$\phi(\mathcal{D}T(v+z))=b_z\, \mathcal{D}T(v+z)$,
where $a_z$ and $b_z$ are nonzero scalars depending on $z$.
Applying $F_k$ on both sides gives
\begin{align*}
\phi\bigl(F_kT(v+z)\bigr)
=a_z\,F_k T(v+z),\\
\phi\bigl(F_k \mathcal{D}T(v+z)\bigr)
=b_z\,F_k \mathcal{D}T(v+z).
\end{align*}
The rest of the proof follows the arguments of Theorem~\ref{thm:structure}(3) .
\end{proof}

We provide a sufficient condition for the module $\overline{V}(T(v))$ to be irreducible.
\begin{theorem}
\label{thm:irreducibility-1}
If $W_{rs}=W_{rs}^{(1)}$
for any $(r,s)\notin \{(k,i),(k,j)\}$ with $1\leq s\leq r\leq n-1$,
then
$\overline{V}(T(v))$
is simple.
\end{theorem}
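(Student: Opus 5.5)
The plan is to mirror the proof of Theorem~\ref{thm:irreducibility}: first reduce an arbitrary nonzero element of a submodule to a single basis vector, then show that every basis vector generates the whole module.

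Let $N\subseteq\overline{V}(T(v))$ be a nonzero submodule and $0\neq\xi=\sum_j c_jL(z_j)\in N$. Corollary~\ref{cor:1singular-separates} gives some basis vector $L(z_0)\in N$. It remains to show $T(v)\in\U_q(L(z_0))$, because $T(v)$ generates $\overline{V}(T(v))$ by construction.

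Two reductions are needed.

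\textbf{Step 1: pass from $\mathcal{D}T$ to $T$.} If $L(z_0)=\mathcal{D}T(v+z_0)$ with $z_{0,ki}<z_{0,kj}$, I would try to reach an honest tableau vector $T(v+z')$. Apply $F_k$ or $E_k$ to shift the $(k,i)$ or $(k,j)$ entry. These entries are only defined modulo $m$ (as $z_{ki},z_{kj}\in\mathbb{Z}_m$), so repeated shifts let them cross, and the action formulas \eqref{TvDTv action} produce a $T$-term with coefficient $\mathcal{D}^v$ of a nonvanishing function. Alternatively, use Theorem~\ref{1-singular(1)}: $c_{rs}$ acts on $\mathcal{D}T(v+z)$ with a Jordan block whose off-diagonal term is $\mathcal{D}^v(\gamma_{rs}(v+z))T(v+z)$. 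Choose $c_{rs}$ with this derivative nonzero; then $(c_{rs}-\gamma_{rs}(v+z))\mathcal{D}T(v+z)$ is a nonzero multiple of $T(v+z)$, which places $T(v+z_0)\in N$ directly. I expect this second route to be cleaner. It requires checking that some $\gamma_{rs}$ with $r=k$ has nonzero antisymmetric derivative in $(x,y)$ at the singular point, which follows from the explicit shuffle formula \eqref{action of c_{mk}} since $q^{x}-q^{y}$-type terms appear.

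\textbf{Step 2: move the remaining entries to zero.} Row by row, from row $n-1$ downward, move every coordinate of $z$ back to $0$ using the same chains of $F_s,F_{s+1},\dots,F_{n-1}$ (or $E$'s) as in the proof of Theorem~\ref{thm:irreducibility}.
\begin{itemize}
\item For $(r,s)\notin\{(k,i),(k,j)\}$ we always have $W_{rs}=W_{rs}^{(1)}$, so $z_{rs}\in\mathbb{Z}_m$ is periodic. Lowering steps via $F$ have nonzero coefficients whenever the target tableau is nonzero in the quotient, by the generic conditions and condition~\eqref{condition of tableaux}. Consequently $m$ lowering steps return the entry to its original value, and no obstruction of the type in Theorem~\ref{thm:irreducibility} (requiring $W^{(2)}$) arises.
\item For the singular pair $(k,i),(k,j)$, both coordinates lie in $\mathbb{Z}_m$ as well. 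Shift them via $E_k$ or $F_k$. The coefficients in \eqref{TvDTv action} are obtained from $\mathcal{D}^v([x-y]_q f)$, and for a $T$-vector this equals the symmetric value $f(v)$ at the hyperplane. That value is nonzero except when the shift crosses $v_{ki}=v_{kj}$ modulo $m$. In the crossing case, the result may only be a $\mathcal{D}T$ vector plus a $T$ term; Step 1 is then applied again to recover a pure $T$-vector.
\end{itemize}

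\textbf{Main obstacle.} The hardest part is the coefficient bookkeeping near the hyperplane $\mathcal{H}$. One must show that in each move some component of the resulting vector is a basis vector with nonzero coefficient, which can then be isolated by Corollary~\ref{cor:1singular-separates}. The plan is to handle this by computing $E_kS(z)$ and $F_kS(z)$ from the displayed formulas in the proof of Theorem~\ref{1-singular(1)}. The symmetric coefficient $f_{k,s}(z)+f_{\tau(k,s)}(\tau(z))$ evaluated at $x=y$ is nonzero precisely when the relevant numerator brackets $[\,\cdot\,]_q$ are nonzero. This holds away from the boundary conditions already encoded in \eqref{condition-1}.

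Once both reductions are done, $T(v)\in N$, so $N=\overline{V}(T(v))$ and the module is simple.
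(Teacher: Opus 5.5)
Your proof is correct, but it is organized differently from the paper's. The paper proves four connectivity statements among basis vectors:
\begin{enumerate}
\item[(1)] every $T(v+z)$ generates every $T(v+z')$;
\item[(2)] every $\mathcal{D}T(v+z)$ generates every $\mathcal{D}T(v+z')$;
\item[(3)] some $T$-vector generates a $\mathcal{D}T$-vector, via $F_k$ applied to a tableau with $z_{ki}=z_{kj}$;
\item[(4)] some $\mathcal{D}T$-vector generates a $T$-vector, via $F_k$ applied to a $\mathcal{D}T(v+z)$ with $z_{ki}\neq z_{kj}$.
\end{enumerate}
You instead use that $\overline{V}(T(v))$ is cyclic on $T(v)$ by construction, so you only need ``every basis vector reaches $T(v)$''. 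You also convert $\mathcal{D}T(v+z)$ to $T(v+z)$ through the Jordan-block action of $\Gamma_q$ from Theorem~\ref{1-singular(1)}, rather than through $F_k$.

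This makes (2) and (3) unnecessary as separate statements. That is a real gain, because (2) is the least convincing part of the paper: applying generators to a $\mathcal{D}T$-vector also produces $T$-components, so the generic argument does not carry over verbatim. Your conversion also works for every nonzero $\mathcal{D}T(v+z)$, not just one chosen vector.

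The nonvanishing you need is easy to make explicit. Up to its scalar prefactor, $\gamma_{k1}(w)=q^{-\sum_p w_{kp}}\sum_p q^{2w_{kp}}$. Its antisymmetric derivative at the singular point is a nonzero multiple of $q^{2z_{ki}}-q^{2z_{kj}}$. This is nonzero because $\mathcal{D}T(v+z)\neq 0$ forces $z_{ki}\not\equiv z_{kj}\pmod m$, and $m$ is odd.

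The price is that Step 2 must still leave the hyperplane $z_{ki}=z_{kj}$, for instance to get from $(c,c)$ to $(0,0)$. There you are implicitly redoing the paper's computation for (3): isolate the $\mathcal{D}T$-component with Corollary~\ref{cor:1singular-separates}, then apply Step 1.

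Two remarks in Step 2 are inaccurate, although the chains you invoke repair them.
\begin{itemize}
\item A lowering coefficient can vanish even when the target is a nonzero basis vector. If $w_{rs}\equiv w_{r-1,t}\pmod m$, the coefficient of $T(w-\delta^{rs})$ in $F_rT(w)$ is $0$. This is exactly why the chains $F_t,\dots,F_r$ from the proof of Theorem~\ref{thm:irreducibility} are needed.
\item In the singular row, the $\mathcal{D}T$-component arises when you lower \emph{starting from} a point with $z_{ki}=z_{kj}$, where the coefficients have a pole at $x=y$. It does not arise when you \emph{land on} such a point, where the relevant denominator is $[1]_q$. Its coefficient is a nonzero multiple of $\prod_t[w_{ki}-w_{k-1,t}]_q$, so any coincidence with row $k-1$ must be cleared first.
\end{itemize}
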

\begin{proof}
By Corollary \ref{cor:1singular-separates},
it suffices to show the following statements:\\
(1) For any fixed $T(v+z)$, we have $T(v+z')\in \U_q T(v+z)$ for any $z'$,\\
(2) For any fixed $\mathcal{D}T(v+z)$, we obtain $\mathcal{D}T(v+z') \in \U_q\mathcal{D}T(v+z)$ for any $z'$.\\
(3) There exists  $T(v+z)$ such that $\mathcal{D}T(v+z') \in \U_qT(v+z)$ for some $z'$,\\
(4) There exists $\mathcal{D}T(v+z)$ such that $T(v+z') \in \U_q\mathcal{D}T(v+z)$ for some $z'$.

The proof of (1) and (2) can be proven by same argument as in the proof of Theorem \ref{thm:irreducibility}.
For (3),
we take $z$ with $z_{ki}=z_{kj}$ and $z_{k-1,a}\neq z_{ki}$ for any $a$.
Then the coefficient of $\mathcal{D}T(v+z-\delta^{ks})$ in $F_k T(v+z)$ is nonzero.
According to Corollary \ref{cor:1singular-separates},
$\mathcal{D}T(v+z-\delta^{ks}) \in \U_qT(v+z)$.
To get (4),
we take $z$ with $z_{ki}\neq z_{kj}$ and $z_{k-1,a}\notin\{z_{ki},z_{kj}\}$ for any $a$.
The coefficient of $T(v+z-\delta^{ks})$ in $F_k \mathcal{D}T(v+z)$ is nonzero.
Similarly,
$T(v+z-\delta^{ks}) \in \U_q \mathcal{D}T(v+z)$.
\end{proof}

For $v_{n-1,i}=v_{ni}, v_{n-1,j}=v_{nj}$ and $v_{ni}=v_{nj}$,
we provide a different construction.
Let $V(T(v))$ be the vector space spanned by the set of tableaux
\begin{align*}
\{T(v+z), \mathcal{D}T(v+z),& -(m-1)\leq z_{n-1,i},z_{n-1,j}\leq0,\\
z_{rs}&\in \mathbb Z \text{ for } (r,s)\notin\{ (n-1,i),(n-1,j)\} \}.
\end{align*}

We impose the following conditions on the tableaux:
$$T(v+z) = T(v+\tau(z))\quad \mathcal{D}T(v+z) + \mathcal{D}T(v+\tau(z)) = 0.$$
Where $\tau(z)$ is obtained from $z$ by switching $z_{n-1,i}$ and $z_{n-1,j}$ and all other entries are fixed.

\begin{theorem}\label{1-singular(2)}
$V(T(v))$ has a $\U_q$-module structure.
The action of generators of $\U_q$ is given by
\begin{align*}
& u(T(v+z))  = \mathcal{D}^{v}([x-y]_{q} u(T(v+z))), \\
&u(\mathcal{D} T({v}+z')) = \mathcal{D}^{v}(u(T(v+z'))).
\end{align*}
Moreover, for any $1\leq r\leq s \leq n$, the action of the generators of $\Gamma_q$ can be written as follows:
\begin{align*}
&c_{rs}(T(v+z)) = \gamma_{rs}(v+z)T((v+z)),\\
&c_{rs}(D(T(v+z))) =\gamma_{rs}(v+z)DT(v+z)+D^v\left(\gamma_{rs}(v+z)\right)T((v+z)).
\end{align*}
\end{theorem}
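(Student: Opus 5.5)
Our plan is to mimic the proof of Theorem \ref{1-singular(1)}. The difference is that the two singular entries now sit in row $n-1$. Their ranges are truncated to $-(m-1)\le z_{n-1,i},z_{n-1,j}\le 0$, and we have $v_{n-1,i}=v_{ni}=v_{nj}=v_{n-1,j}$. Replace $v$ by $v(x,y)$ with $v(x,y)_{n-1,i}=x$ and $v(x,y)_{n-1,j}=y$; the top row stays fixed. Over $\mathbb{C}(x,y)$ consider the span $V(T(v(x,y)))$ of the tableaux $T(v(x,y)+z)$ with $z_{n-1,i},z_{n-1,j}\in\{-(m-1),\dots,0\}$ and the other $z_{rs}\in\mathbb{Z}$.

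The first step is to show that this span is a $\U_q$-module under the formulae \eqref{GT-formulae}. For generic $x,y$ the only issue is the boundary of the truncated range. The coefficient of $T(w+\delta^{n-1,i})$ in $E_{n-1}T(w)$ contains the factor $\prod_t[w_{n-1,i}-v_{n,t}]_q$. When $z_{n-1,i}=0$ we have $w_{n-1,i}=x$, so this factor contains $[x-v_{ni}]_q$ and $[x-v_{nj}]_q$. We treat $x-v_{ni}$ as a formal quantity vanishing on the specialization, in the same way the top entries are handled before Proposition \ref{prop:cyclic-module-criterion}. Dually, the coefficient of $F_{n-1}$ at $z_{n-1,i}=-(m-1)$ contains $[x-(m-1)-1-v_{n-2,\cdot}]_q$. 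This matches the Case II truncation, exactly as for the submodule $W^{(2)}$ in Theorem \ref{thm:structure}.

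More precisely, we perform the construction on the lattice $x-v_{ni}\in$ (formal) and take the $\mathcal F$-submodule $L$ spanned by
\[
S(z)=\tfrac12\bigl(T(v(x,y)+z)+T(v(x,y)+\tau(z))\bigr),\qquad A(z)=\frac{T(v(x,y)+z)-T(v(x,y)+\tau(z))}{2[x-y]_q}.
\]
Then we verify, as in the proof of Theorem \ref{1-singular(1)}, that $E_r,F_r,K_r$ preserve $L$. For this we write $E_rS(z)$ and $E_rA(z)$ in terms of $f_{r,s}\pm f_{\tau(r,s)}\circ\tau$ and use $\tau\cdot f_{r,s}=f_{\tau(r,s)}$. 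The symmetric combination is $\tau$-invariant. The antisymmetric combination vanishes on $\mathcal H$, so division by $[x-y]_q$ stays in $\mathcal F$.

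The main obstacle is the boundary terms. The vanishing factors $[x-v_{ni}]_q[y-v_{nj}]_q$ and the cross terms $[x-v_{nj}]_q$, $[y-v_{ni}]_q$ all degenerate simultaneously on $\mathcal H$ after specialization. We must check that in $E_{n-1}A(z)$ the term pushing $z_{n-1,i}$ from $0$ to $1$ still has coefficient in $(x-y)\mathcal F$ times an element of $L$ after symmetrization, so that it dies under $\otimes_{\mathcal F}\mathbb{C}$. We would first try to show directly that the numerator is divisible by $[x-v_{ni}]_q[x-v_{nj}]_q$. On the specialization this equals $[x-y]_q^2$ up to a unit, which is enough to absorb the $1/[x-y]_q$ coming from $A$. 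The analogous statement for $F_{n-1}$ at the lower end $z=-(m-1)$ uses $[v_{n-1,\cdot}-v_{n-2,\cdot}+m]_q$ and works the same way.

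Finally, tensor with the one-dimensional $\mathcal F$-module $\mathbb{C}$ (evaluation on $\mathcal H$) and map $S(z)\otimes1\mapsto T(v+z)$, $A(z)\otimes1\mapsto\mathcal DT(v+z)$. The relations $T(v+z)=T(v+\tau(z))$ and $\mathcal DT(v+z)=-\mathcal DT(v+\tau(z))$ follow from $S(\tau z)=S(z)$ and $A(\tau z)=-A(z)$. This gives the stated formulas through $\mathcal D^v$, since $\mathcal D^v([x-y]_q\,\cdot)$ recovers the $S$-coefficient and $\mathcal D^v$ the $A$-coefficient. The $c_{rs}$ formulas follow verbatim from the computation of $c_{rs}S(z)$ and $c_{rs}A(z)$ in Theorem \ref{1-singular(1)}, since $\gamma_{rs}$ is symmetric under $\tau$.
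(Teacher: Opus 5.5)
There is a genuine gap, located exactly at the ``boundary terms'' you identify as the main obstacle. You keep the top row fixed and deform only $v_{n-1,i},v_{n-1,j}$. With that choice the window $-(m-1)\le z_{n-1,i},z_{n-1,j}\le 0$ is not $\U_q$-stable over $\mathbb{C}(x,y)$. At $z_{n-1,i}=0$, the coefficient of $T(v(x,y)+z+\delta^{n-1,i})$ in $E_{n-1}T(v(x,y)+z)$ contains $[x-v_{ni}]_q[x-v_{nj}]_q$, which is a nonzero element of $\mathbb{C}(x,y)$. So your first step is false, $L$ is not a $(\U_q,\mathcal F)$-bimodule, and $L\otimes_{\mathcal F}\mathbb{C}$ inherits no action.

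Your proposed repairs do not close this. ``Treating $x-v_{ni}$ as a formal quantity vanishing on the specialization'' is not an operation available inside $\mathbb{C}(x,y)$. The assertion that $[x-v_{ni}]_q[x-v_{nj}]_q$ equals $[x-y]_q^2$ up to a unit is wrong: with $a=v_{ni}=v_{nj}$ it is $[x-a]_q^2$, which does not vanish on $\mathcal H$ and does not lie in $(x-y)\mathcal F$. It vanishes only at the evaluation point, and that does not help while the vectors it multiplies lie outside $L$. Enlarging the lattice to all $z_{n-1,i},z_{n-1,j}\in\mathbb Z$ does not rescue the argument either. At a root of unity the denominators $[x-y+z_{n-1,i}-z_{n-1,j}]_q$ have poles along $\mathcal H$ whenever $z_{n-1,i}-z_{n-1,j}\in m\mathbb Z\setminus\{0\}$, and $\tau$-symmetrization does not cancel those.

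Your lower-end discussion is also off. The $F_{n-1}$-coefficient from $z_{n-1,i}=-(m-1)$ to $-m$ involves only row $n-2$ and does not vanish. The lower truncation is a quotient, and what it needs is that the $E_{n-1}$-coefficient from $-m$ to $-m+1$ vanish identically.

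The missing idea is to build the top row into the deformation; this is what the paper does, and your own remark about treating top entries formally points toward it. Let $v(x,y)_{n,i}=v(x,y)_{n-1,i}$ both move with $x$, and $v(x,y)_{n,j}=v(x,y)_{n-1,j}$ both move with $y$. Then:
\begin{enumerate}
\item The factor $[v(x,y)_{n-1,i}+z_{n-1,i}-v(x,y)_{n,i}]_q=[z_{n-1,i}]_q$ vanishes identically for $z_{n-1,i}\in\{0,-m\}$, and likewise for $j$. So the window is already a $\U_q$-module over $\mathbb{C}(x,y)$, by the same mechanism as Case II in the generic construction.
\item Inside the window, the only pole along $\mathcal H$ occurs at $z_{n-1,i}=z_{n-1,j}$, which the $S/A$ symmetrization handles.
\item $\tau$ is now realized as $x\leftrightarrow y$, swapping both $(n-1,i),(n-1,j)$ and $(n,i),(n,j)$. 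This is compatible with the formulae by symmetry within rows.
\end{enumerate}
With that deformation, the rest of your argument (the $S(z)$, $A(z)$ lattice, $\otimes_{\mathcal F}\mathbb{C}$, and the $c_{rs}$ computation) goes through as in Theorem~\ref{1-singular(1)}.
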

\begin{proof}
Replace $v$ by $v(x,y)$ where $v(x,y)_{rs} = v_{rs}$ if $(r,s)\notin \{(n,i), (n,j), (n-1,i), (n-1,j)\}$,
and $v(x,y)_{n,i}+x=v(x,y)_{n-1,i}+x$, $v(x,y)_{n,j}+y=v(x,y)_{n-1,j}+y$.
Then
\begin{multline*}
V\bigl(T(v(x,y))\bigr) = \operatorname{span}_{\mathbb{C}(x,y)}\bigl\{\, T(v(x,y)+z) \bigm|
-(m-1) \le z_{n-1,i},\, z_{n-1,j} \le 0, \\
z_{rs} \in \mathbb{Z} \text{ for all }(r,s) \notin \{(n-1,i),\,(n-1,j)\} \,\bigr\}
\end{multline*}
is a $\U_q$-module with action of the generators given by the formulas (\ref{GT-formulae}).

Let $L \subset V(T(v(x,y)))$ be the $\mathcal{F}$-span of $\{S(z), A(z)\}$,
where
$$S(z) = \frac{T(v(x,y)+z) + T(v(x,y)+\tau(z))}{2},$$
$$A(z) = \frac{T(v(x,y)+z) - T(v(x,y)+\tau(z))}{2[x-y]_q}.$$
Similar to the proof of Theorem~\ref{1-singular(1)},
we can equip $V(T(v))$ a  $\U_q(\mathfrak{gl}_n)$-module structure via the isomorphism
$L \otimes_{\mathcal{F}} \mathbb{C}\cong V(T(v))$.
The action of the central element $\Gamma_q$ follows similarly.
\end{proof}

Similar to \eqref{def of VT--1sig}, by taking subquotients of $V(T(v))$,
we can construct finite-dimensional modules.
By the same arguments as in the proof of Theorem~\ref{thm:pairwise-non-iso-modules},
we can show that for any fixed $v$ and distinct choices of $W_{rs}$ with $1\leq s\leq r\leq n-1$,
the modules $\overline{V}(T(v))$
are pairwise non-isomorphic.
Moreover,
these modules are not isomorphic to the modules $\overline{V}(T(v))$ in Theorem~ \ref{thm:pairwise-non-iso-modules}.

The actions of $E_k^m$ and $F_k^m$ on
$\overline{V}(T(v))$ in Theorem \ref{1-singular(1)} and \ref{1-singular(2)}
are scalar. For certain choice of $v$, the module is simple and these scalar actions are nonzero.
Consequently, we obtain cyclic modules with
Gelfand--Tsetlin weights corresponding to $1$-singular tableaux.

\section {Highest weight module over $\U_{q}(\mathfrak{gl}_3)$}\label{section:ex}
Using the constructions in Section \ref{section name:GT basis}, we obtain a family of cyclic modules and highest weight modules.
In this section, we employ the Gelfand--Tsetlin bases to study the baby Verma modules of $\U_q(\mathfrak{gl}_3)$ and determine the their composition series.

Let $\lambda=(\lambda_1,\lambda_2,\lambda_3)$ be a highest weight  of $\U_{q}(\mathfrak{gl}_3)$,
we assume that $\lambda_i-\lambda_j\in \mathbb{Z}$ or $\lambda_i-\lambda_j\notin\frac{\mathbb{Z}}{2}$.
Denote by $v$ the tableau:
\begin{center}
\Large	
	\begin{tabular}{c c}
		\xymatrixrowsep{0.2cm}
		\xymatrixcolsep{0.1cm}\xymatrix @C=0.2em {
			&\scriptstyle{v_{1}}    & &\scriptstyle{v_{2}}  & &\scriptstyle{v_{3}}  \\
			&&\scriptstyle{v_{1}}   & &\scriptstyle{v_{2}}   \\
			& &&\scriptstyle{v_{1}}   &     \\
		}
	\end{tabular}
\end{center}
where $v_1=\lambda_1$, $v_{2}=\lambda_2-1$, $v_3=\lambda_3-2$.
For any given $\lambda$, we construct certain baby Verma modules containing
Gelfand--Tsetlin weight associated with $v$. The construction differs depending on whether $\lambda_1 - \lambda_2 \notin \frac{\mathbb{Z}}{2}$ or $\lambda_1 - \lambda_2 \in \mathbb{Z}$.
For each case, we employ distinct methods
from Section~\ref{section name:GT basis}.

\subsection{Baby Verma modules for weights with $\lambda_{1}-\lambda_{2}\notin \frac{\mathbb{Z}}{2}$}\label{sec:baby_verma_noninteger_diff}
\numberwithin{equation}{section}
We can obtain such modules from the generic modules constructed in Section \ref{subsec:generic-GT-modules}.
By Theorem \ref{thm:structure}, for any fixed $v$ and distinct choices of $W_{kj}$ with $1\leq j\leq k\leq 2$,
the quotients $\overline{V}(T(v))$
are pairwise non-isomorphic.
In this section, we write $\overline{V}(T(v))$ as $V$ for short.
We now give $8$ pairwise non-isomorphic baby Verma modules and determine their irreducible subquotients.
Recall that the submodules of $\U_q(T(v))$ are defined as follows:
\begin{align*}
W_{21}^{(1)} &:= \langle\, T(v - m\delta^{21} - m\delta^{11}) - T(v) \,\rangle,
                &
W_{21}^{(2)} &:= \langle\, T(v - m\delta^{21} - m\delta^{11}) \,\rangle,\\[4pt]
W_{22}^{(1)} &:= \langle\, T(v - m\delta^{22}) - T(v) \,\rangle,
                &
W_{22}^{(2)} &:= \langle\, T(v - m\delta^{22}) \,\rangle,\\[4pt]
W_{11}^{(1)} &:= \langle\, T(v - m\delta^{11}) - T(v) \,\rangle,
                &
W_{11}^{(2)} &:= \langle\, T(v - m\delta^{11}) \,\rangle.
\end{align*}
(1) Let $V = \U_q(T(v)) \big/ W_{21}^{(2)}+W_{22}^{(2)}+W_{11}^{(2)}.$
A basis for $V$ is given by $T(v+z)$, where $z$ satisfies:
\begin{equation*}
\begin{aligned}
-(m-1)\leq z_{21},z_{22} \leq 0,\\
0\leq z_{21}-z_{11} \leq m-1,\\
z_{kj}\in \mathbb{Z},\; 1\leq j\leq k\leq 2.
\end{aligned}
\end{equation*}

\begin{enumerate}
\item[(1.1)]
If $\lambda_{1} - \lambda_{3} \notin \frac{\mathbb{Z}}{2}$ and $\lambda_{2} - \lambda_{3} \notin \frac{\mathbb{Z}}{2}$,
then $V$ is simple.

\item[(1.2)]
If $\lambda_{1} - \lambda_{3} \in \mathbb{Z}$,
then we may set $v_3 = v_1 - t$ with $0 \leq t \leq m-1$.\\
(1.2.1)
If $t = 0$, then $V$ is simple.\\
(1.2.2)
If $t \neq 0$, then $V$ admits the following composition series:
$$
0 \subsetneq \U_q T(v - t\delta_{21} - t\delta_{11}) \subsetneq V,
$$
where $\dim \U_q T(v - t\delta_{21} - t\delta_{11}) = m^2(m-t)$ and $\dim V = m^3$.
The highest weight of the composition factor $V \big/ \U_q T(v - t\delta_{21} - t\delta_{11})$ is $\lambda$, and the highest weight of $ \U_q T(v - t\delta_{21} - t\delta_{11})$ is
$(\lambda_3-2, \lambda_2, \lambda_1+2)$.

\item[(1.3)]
If $\lambda_{2} - \lambda_{3} \in \mathbb{Z}$, then we may set $v_3 = v_2 - t$ with $0 \leq t \leq m-1$.\\
(1.3.1)
If $t = 0$, then $V$ is simple.\\
(1.3.2)
If $t \neq 0$, then $V$ admits the following composition series:
$$
0 \subsetneq \U_q T(v - t\delta_{22}) \subsetneq V,
$$
where $\dim \U_q T(v - t\delta_{22}) = m^2(m-t)$ and $\dim V = m^3$.
The highest weight of the composition factor $V \big/ \U_q T(v - t\delta_{22})$ is $\lambda$, and the highest weight of $ \U_q T(v - t\delta_{22})$ is $(\lambda_1, \lambda_3-1, \lambda_2+1)$.
\end{enumerate}
(2) Let $V = \U_q(T(v)) \big/ W_{21}^{(1)}+W_{22}^{(2)}+W_{11}^{(2)}$.
A basis for $V$ is given by $T(v+z)$, where $z$ satisfies:
\begin{equation*}
\begin{aligned}
-(m-1)\leq z_{22} \leq 0,\\
0\leq z_{21}-z_{11} \leq m-1,\\
z_{21}\in \mathbb{Z}_m,\;
z_{11}, z_{22}\in \mathbb{Z}.
\end{aligned}
\end{equation*}
\begin{enumerate}
\item[(2.1)]
If $\lambda_{1} - \lambda_{3} \notin \frac{\mathbb{Z}}{2}$ and $\lambda_{2} - \lambda_{3} \notin \frac{\mathbb{Z}}{2}$,
then $V$ is simple.

\item[(2.2)]
If $\lambda_{1} - \lambda_{3} \in \mathbb{Z}$, then $V$ is simple.

\item[(2.3)]
If $\lambda_{2} - \lambda_{3} \in \mathbb{Z}$, then we may set $v_3 = v_2 - t$ with $0 \leq t \leq m-1$.\\
(2.3.1)
If $t = 0$, then $V$ is simple.\\
(2.3.2)
If $t \neq 0$, then $V$ admits the following composition series:
$$
0 \subsetneq \U_q T(v - t\delta_{22}) \subsetneq V,
$$
where $\dim \U_q T(v - t\delta_{22}) = m^2(m-t)$ and $\dim V = m^3$.
The highest weight of the composition factor $V \big/ \U_q T(v - t\delta_{22})$ is $\lambda$, and the highest weight of $ \U_q T(v - t\delta_{22})$ is $(\lambda_1, \lambda_3-1, \lambda_2+1)$.
\end{enumerate}
(3) Let $V = \U_q(T(v)) \big/ W_{21}^{(2)}+W_{22}^{(1)}+W_{11}^{(2)}$.
A basis for $V$ is given by $T(v+z)$, where $z$ satisfies:
\begin{equation*}
\begin{aligned}
-(m-1)\leq z_{21} \leq 0,\\
0\leq z_{21}-z_{11} \leq m-1,\\
z_{22}\in \mathbb{Z}_m,\;
z_{11}, z_{21}\in \mathbb{Z}.
\end{aligned}
\end{equation*}
\begin{enumerate}
\item[(3.1)]
If $\lambda_{1} - \lambda_{3} \notin \frac{\mathbb{Z}}{2}$ and $\lambda_{2} - \lambda_{3} \notin \frac{\mathbb{Z}}{2}$,
then $V$ is simple.

\item[(3.2)]
If $\lambda_{1} - \lambda_{3} \in \mathbb{Z}$, then we may set $v_3 = v_1 - t$ with $0 \leq t \leq m-1$.\\
(3.2.1)
If $t = 0$, then $V$ is simple.\\
(3.2.2)
If $t \neq 0$, then $V$ admits the following composition series:
$$
0 \subsetneq \U_q T(v - t\delta_{21} - t\delta_{11}) \subsetneq V,
$$
where $\dim \U_q T(v - t\delta_{21} - t\delta_{11}) = m^2(m-t)$ and $\dim V = m^3$.
The highest weight of the composition factor $V \big/ \U_q T(v - t\delta_{21} - t\delta_{11})$ is $\lambda$, and the highest weight of $ \U_q T(v - t\delta_{21} - t\delta_{11})$ is $(\lambda_3-2, \lambda_2, \lambda_1+2)$.

\item[(3.3)]
If $\lambda_{2} - \lambda_{3} \in \mathbb{Z}$, then $V$ is simple.
\end{enumerate}
(4)Let $V = \U_q(T(v)) \big/ W_{21}^{(1)}+W_{22}^{(1)}+W_{11}^{(2)}$.
A basis for $V$ is given by $T(v+z)$, where $z$ satisfies:
\begin{equation*}
\begin{aligned}
0\leq z_{21}-z_{11} \leq m-1,\\
z_{21},z_{22}\in \mathbb{Z}_m,\;
z_{11}\in \mathbb{Z}.
\end{aligned}
\end{equation*}
For any $\lambda_3$, $V$ is simple.\\
(5) Let $V = \U_q(T(v)) \big/ W_{21}^{(1)}+W_{22}^{(2)}+W_{11}^{(1)}$.
A basis for $V$ is given by $T(v+z)$, where $z$ satisfies:
\begin{equation*}
\begin{aligned}
-(m-1)\leq z_{22} \leq 0,\\
0\leq z_{21}-z_{11} \leq m-1,\\
z_{11},z_{21}\in \mathbb{Z}_m,\;
z_{22}\in \mathbb{Z}.
\end{aligned}
\end{equation*}
\begin{enumerate}
\item[(5.1)]
If $\lambda_{1} - \lambda_{3} \notin \frac{\mathbb{Z}}{2}$ and $\lambda_{2} - \lambda_{3} \notin \frac{\mathbb{Z}}{2}$,
then $V$ is simple.

\item[(5.2)]
If $\lambda_{1} - \lambda_{3} \in \mathbb{Z}$, then $V$ is simple.

\item[(5.3)]
If $\lambda_{2} - \lambda_{3} \in \mathbb{Z}$, then we may set $v_3 = v_2 - t$ with $0 \leq t \leq m-1$.\\
(5.3.1)
If $t = 0$, then $V$ is simple.\\
(5.3.2)
If $t \neq 0$, then $V$ admits the following composition series:
$$
0 \subsetneq \U_q T(v - t\delta_{22}) \subsetneq V,
$$
where $\dim \U_q T(v - t\delta_{22}) = m^2(m-t)$ and $\dim V = m^3$.
The highest weight of the composition factor $V \big/ \U_q T(v - t\delta_{22})$ is $\lambda$, and the highest weight of $ \U_q T(v - t\delta_{22})$ is $(\lambda_1, \lambda_3-1, \lambda_2+1)$.
\end{enumerate}
(6) Let $V = \U_q(T(v)) \big/ W_{21}^{(2)}+W_{22}^{(2)}+W_{11}^{(1)}$.
A basis for $V$ is given by $T(v+z)$, where $z$ satisfies:
\begin{equation*}
\begin{aligned}
-(m-1)\leq z_{21},z_{22} \leq 0,\\
0\leq z_{21}-z_{11} \leq m-1,\\
z_{11}\in \mathbb{Z}_m,\;
z_{21},z_{22}\in \mathbb{Z}.
\end{aligned}
\end{equation*}

\begin{enumerate}
\item[(6.1)]
If $\lambda_{1} - \lambda_{3} \notin \frac{\mathbb{Z}}{2}$ and $\lambda_{2} - \lambda_{3} \notin \frac{\mathbb{Z}}{2}$,
then $V$ is simple.

\item[(6.2)]
If $\lambda_{1} - \lambda_{3} \in \mathbb{Z}$, then we may set $v_3 = v_1 - t$ with $0 \leq t \leq m-1$.\\
(6.2.1)
If $t = 0$, then $V$ is simple.\\
(6.2.2)
If $t \neq 0$, then $V$ admits the following composition series:
$$
0 \subsetneq \U_q T(v - t\delta_{21} - t\delta_{11}) \subsetneq V,
$$
where $\dim \U_q T(v - t\delta_{21} - t\delta_{11}) = m^2(m-t)$ and $\dim V = m^3$.
The highest weight of the composition factor $V \big/ \U_q T(v - t\delta_{21} - t\delta_{11})$ is $\lambda$, and the highest weight of $ \U_q T(v - t\delta_{21} - t\delta_{11})$ is $(\lambda_3-2, \lambda_2, \lambda_1+2)$.

\item[(6.3)]
If $\lambda_{2} - \lambda_{3} \in \mathbb{Z}$, then we may set $v_3 = v_2 - t$ with $0 \leq t \leq m-1$.\\
(6.3.1)
If $t = 0$, then $V$ is simple.\\
(6.3.2)
If $t \neq 0$, then $V$ admits the following composition series:
$$
0 \subsetneq \U_q T(v - t\delta_{22}) \subsetneq V,
$$
where $\dim \U_q T(v - t\delta_{22}) = m^2(m-t)$ and $\dim V = m^3$.
The highest weight of the composition factor $V \big/ \U_q T(v - t\delta_{22})$ is $\lambda$, and the highest weight of $ \U_q T(v - t\delta_{22})$ is $(\lambda_1, \lambda_3-1, \lambda_2+1)$.
\end{enumerate}
(7) Let $V = \U_q(T(v)) \big/ W_{21}^{(2)}+W_{22}^{(1)}+W_{11}^{(1)}$.
A basis for $V$ is given by $T(v+z)$, where $z$ satisfies:
\begin{equation*}
\begin{aligned}
-(m-1)\leq z_{21} \leq 0,\\
0\leq z_{21}-z_{11} \leq m-1,\\
z_{11},z_{22}\in \mathbb{Z}_m,\;
z_{21}\in \mathbb{Z}.
\end{aligned}
\end{equation*}

\begin{enumerate}
\item[(7.1)]
If $\lambda_{1} - \lambda_{3} \notin \frac{\mathbb{Z}}{2}$ and $\lambda_{2} - \lambda_{3} \notin \frac{\mathbb{Z}}{2}$,
then $V$ is simple.

\item[(7.2)]
If $\lambda_{1} - \lambda_{3} \in \mathbb{Z}$, then we may set $v_3 = v_1 - t$ with $0 \leq t \leq m-1$.\\
(7.2.1)
If $t = 0$, then $V$ is simple.\\
(7.2.2)
If $t \neq 0$, then $V$ admits the following composition series:
$$
0 \subsetneq \U_q T(v - t\delta_{21} - t\delta_{11}) \subsetneq V,
$$
where $\dim \U_q T(v - t\delta_{21} - t\delta_{11}) = m^2(m-t)$ and $\dim V = m^3$.
The highest weight of the composition factor $V \big/ \U_q T(v - t\delta_{21} - t\delta_{11})$ is $\lambda$, and the highest weight of $ \U_q T(v - t\delta_{21} - t\delta_{11})$ is $(\lambda_3-2, \lambda_2, \lambda_1+2)$.

\item[(7.3)]
If $\lambda_{2} - \lambda_{3} \in \mathbb{Z}$, then $V$ is simple.
\end{enumerate}
(8) Let $V = \U_q(T(v)) \big/ W_{21}^{(1)}+W_{22}^{(1)}+W_{11}^{(1)}$.
A basis for $V$ is given by $T(v+z)$, where $z$ satisfies:
\begin{equation*}
\begin{aligned}
0\leq z_{21}-z_{11} \leq m-1,\\
z_{kj}\in \mathbb{Z}_m, 1\leq j\leq k\leq 2.
\end{aligned}
\end{equation*}
For any $\lambda_3$, $V$ is simple.

Since the highest weight $\lambda$ already determines part of the center, we now compute the actions of the remaining central elements
$F_1^m$, $F_2^m$ and $E_{31}^m$ on the modules $V$.
It suffices to compute their action on the highest weight vector $T(v)$.
By Proposition~\ref{prop:cyclic-module-criterion},
we have
\begin{equation}
\begin{aligned}
    &F_{1}^mT(v)=T(v-m\delta_{11}),\\
    &F_{2}^mT(v)=
    \prod_{k=1}^{m-1}
    \frac{[v_{2}-v_{1}-k]_q}
    {[v_{2}-v_{1}+k]_q}
    T(v-m\delta_{22}).
\end{aligned}
\end{equation}
Let
$E_{31}=F_2F_1-qF_1F_2$
(cf. \cite[Sections 6.2.2 and 6.2.3]{KS1997}),
then we have
\begin{align*}
&E_{31}^{m} T(v)
=\prod_{k=0}^{m-1}\frac{1}{[v_1-v_2-k]_q} \, T(v - m\delta_{21} - m\delta_{11}) \\
&-\prod_{k=1}^{m-1}
\left(\frac{[v_2-v_1+1]_q- q[v_2- v_1]_q}
{[v_1-v_2+k]_q} \right)
T(v - m\delta_{22} - m\delta_{11}).
\end{align*}
where both coefficients are nonzero.

The action of the central elements $F_1^m$, $F_2^m$ and $E_{31}^m$ on the modules $V$ is given as follows:
the action $F_1^m$ on $V$ is zero in cases $(1),(2),(3),(4)$ and a nonzero scalar in cases $(5),(6),(7),(8)$;
the action of $F_2^m$   on $V$ is zero in cases $(1),(2),(5),(6)$ and a nonzero scalar in cases  $(3),(4),(7),(8)$;
the action of $E_{31}^m$  on $V$ is zero in cases $(1),(2),(3),(4),(6)$ and a nonzero scalar in cases $(5),(7),(8)$.

\subsection{Baby Verma modules for weights with $\lambda_{1}-\lambda_{2}\in \mathbb{Z}$}
\label{sec:baby_verma_integer_diff}
For any given
$\lambda$ such that
$\lambda_{1}-\lambda_{2}\in \mathbb{Z}$, we construct two non-isomorphic modules  from the $1$-singular modules constructed in Section \ref{subsec:1-singular-GT-modules}.
Moreover, if $\lambda_1=\lambda_2-1$,
we have two additional non-isomorphic modules.
Recall that the submodules of
$\U_q(T(v))$ are defined as follows:
\begin{align*}
W_{11}^{(1)} &:= \langle\, T(v - m\delta^{11}) - T(v) \,\rangle,
                &
W_{11}^{(2)} &:= \langle\, T(v - m\delta^{11}) \,\rangle,
\end{align*}
and recall that $L(z)$ is defined by
$$
L(z): =
\begin{cases}
T(v+z), & z_{21}\ge z_{22},\\[4pt]
\mathcal{D}T(v+z), & z_{21}<z_{22}.
\end{cases}
$$
To simplify subsequent discussion, we set
$b: =v_2-v_1$ and $c: =v_3-v_1$,
where $v_1=\lambda_1$, $v_{2}=\lambda_2-1$, $v_3=\lambda_3-2$.

\subsubsection{}
The following two constructions are based on Theorem~\ref{1-singular(1)}.\\
(1) Let $V=\U_q(T(v)) \big/ W_{11}^{(1)}$.
A basis for $V$ is given by $L(z)$, where $z$ satisfies:
$$
z_{kj}\in \mathbb{Z}_m,\; 1\leq j\leq k\leq 2.
$$
For any $\lambda_{3}$, $N_1$ is simple.\\
(2) Let $V=\U_q(T(v)) \big/ W_{11}^{(2)}$.
A basis for $V$ is given by $L(z)$, where $z$ satisfies:
\begin{equation*}
\begin{aligned}
z_{21}-m<z_{11}\leq z_{21}, \\
z_{21},z_{22}\in \mathbb{Z}_m,\; z_{11}\in \mathbb{Z}.
\end{aligned}
\end{equation*}

\begin{enumerate}
\item[(2.1)]
$\lambda_{2} - \lambda_{3} \notin \frac{\mathbb{Z}}{2}$.\\
(2.1.1)
If $v_1=v_2$ (i.e., $b=0$), then the module $V$ is simple.\\
(2.1.2)
If $v_1\neq v_2$ (i.e., $b\neq 0$), then the module $V$ is not simple.
Let $J$ be the submodule of $V$ generated by $T(v + (0,0,b))$, the quotient $V/J$ is simple and has a basis
consisting of $L(z)$ such that:
\begin{equation*}
\begin{aligned}
 	-(m-1)\leq z_{21}\leq -m-b, \\
 	z_{21}-m< z_{11}\leq z_{21},\\
    z_{21},z_{22}\in \mathbb{Z}_m, z_{11}\in \mathbb{Z}.
\end{aligned}
\end{equation*}
The dimension of ${V/J}$ is
$m^2(m+b).$
The composition series of $V$ is as follows:
$$0\subsetneq J \subsetneq V,$$
where $\dim J=-b m^2$ (here $b=\lambda_2-\lambda_1-1<0$) and
and $\dim V = m^3$.
The highest weight of the composition factor $V \big/ J$ is $\lambda$, and the highest weight of $ J$ is $(\lambda_2-1, \lambda_1+1, \lambda_3)$.

\item[(2.2)]
$\lambda_{2} - \lambda_{3} \in \mathbb{Z}$. Set $d=\max\{b,c\}$, where $b$ and $c$ are taken from the set of representatives $\{-(m-1), \dots, 0\}$.\\
(2.2.1)
If $d = 0$, then $V$ is simple.\\
(2.2.2)
If $d\neq 0$, then $V$ is not simple.
Let $J$ be the submodule of $V$ generated by $T\bigl(v + (0,0,b)\bigr)$ for $d=b$, or by $T\bigl(v + (0,0,c)\bigr)$ for $d=c$,
the quotient $V/J$ is simple and has a basis
consisting of $L(z)$ such that:
\begin{equation*}
\begin{aligned}
 	-(m-1)\leq z_{21}\leq -m-d, \\
 	z_{21}-m< z_{11}\leq z_{21},\\
    z_{21},z_{22}\in \mathbb{Z}_m, z_{11}\in \mathbb{Z}.
\end{aligned}
\end{equation*}
The dimension of ${V/J}$ is
$m^2(m+d).$
The composition series of $V$ is as follows:
$$0\subsetneq J \subsetneq V,$$
where $\dim J=-dm^2$ and
and $\dim V = m^3$.
The quotient $V/J$ has highest weight $\lambda$.
For $d=b$, the highest weight of $J$ is $(\lambda_2-1, \lambda_1+1, \lambda_3)$;
for $d=c$, it is $(\lambda_3-2, \lambda_1+1, \lambda_2+1)$.
\end{enumerate}

\subsubsection{}
If $\lambda_1 = \lambda_2-1$ (i.e., $b=0$),
we have two additional non-isomorphic modules by
Theorem~\ref{1-singular(2)}.\\
(3) Let $V=\U_q(T(v)) \big/ W_{11}^{(1)}$.
A basis for $V$ is given by $L(z)$, where $z$ satisfies:
\begin{equation*}
\begin{aligned}
-(m-1)\leq z_{21},z_{22}\leq0, \\
z_{11}\in \mathbb{Z}_m,\;
z_{21},z_{22}\in \mathbb{Z}.
\end{aligned}
\end{equation*}

\begin{enumerate}
\item[(3.1)]
If $\lambda_{2} - \lambda_{3} \notin \frac{\mathbb{Z}}{2}$, then $V$ is simple.

\item[(3.2)]
$\lambda_{2} - \lambda_{3} \in \mathbb{Z}$.\\
(3.2.1)
If $c = 0$, then $V$ is simple.\\
(3.2.2)
If $c\neq 0$, then $V$ is not simple.
Let $J$ be the submodule of $N_3$ generated by $T(v + (0,c,c))$, the quotient $V/J$ is simple and has a basis
consisting of $L(z)$ such that:
\begin{equation*}
\begin{aligned}
 	-(m-1)\leq z_{22}\leq c, \\
 	z_{21}-m< z_{11}\leq z_{21},\\
     z_{11}\in \mathbb{Z}_m,\; z_{21},z_{22}\in \mathbb{Z}.
\end{aligned}
\end{equation*}
The dimension of ${V/J}$ is
$m^2(m+c).$
The composition series of $V$ is as follows:
$$0\subsetneq J \subsetneq V,$$
where $\dim J=-cm^2$ and
and $\dim V = m^3$.
The highest weight of the composition factor $V \big/ J$ is $\lambda$, and the highest weight of $J$ is $(\lambda_3-2, \lambda_1+1, \lambda_2+1)$.
\end{enumerate}

(4) Let $V=\U_q(T(v)) \big/ W_{11}^{(2)}$.
A basis for $V$ is given by $L(z)$, where $z$ satisfies:
\begin{equation*}
\begin{aligned}
-(m-1)\leq z_{21},z_{22}\leq0, \\
z_{21}-m<z_{11}\leq z_{21}, \\
z_{kj}\in \mathbb{Z},\;
1\leq j\leq k\leq 2.
\end{aligned}
\end{equation*}

\begin{enumerate}
\item[(4.1)]
If $\lambda_{2} - \lambda_{3} \notin \frac{\mathbb{Z}}{2}$, then $V$ is simple.

\item[(4.2)]
$\lambda_{2} - \lambda_{3} \in \mathbb{Z}$.\\
(4.2.1)
If $c = 0$, then $V$ is simple.\\
(4.2.2)
If $c\neq 0$, then $V$ is not simple.
Let $J$ be the submodule of $V$ generated by $DT(v+(c,0,-m))$ and the maximal vector $T(v+(0,c,0))$,
the quotient module $V/J$ is simple and has a basis consisting of tableaux $L(z)$ such that
\begin{equation*}
\begin{aligned}
\left(  \left \{ \begin{array}{c}
	c< z_{21}\leq 0 \\
	c< z_{22}\leq 0 \\
	z_{21}-m< z_{11}\leq z_{21}\\
    z_{kj}\in \mathbb{Z}, 1\leq j\leq k\leq 2
\end{array}\right \} \bigcup
\left \{ \begin{array}{c}
	c<z_{22}\leq0 \\
	-(m-1)\leq z_{21}\leq c\\
	z_{22}-m< z_{11}\leq z_{21}\\
    z_{kj}\in \mathbb{Z}, 1\leq j\leq k\leq 2.
\end{array}\right \} \right)\\
\end{aligned}
\end{equation*}
The dimension of ${V/J}$ is
$\dfrac{mc(c-m)}{2}.$
The composition series of $V$ is as follows:
$$0\subsetneq \U_qT(v+(0,c,c))
\subsetneq \U_qT(v+(0,c,0)) \subsetneq J \subsetneq V.$$
The highest weight of the composition factor $V \big/ J$ is $\lambda$. The highest weights of both $J \big/ \U_qT(v+(0,c,0))$ and $\U_qT(v+(0,c,0)) \big/ \U_qT(v+(0,c,c))$ are $(\lambda_1, \lambda_3-1, \lambda_2+1)$, and the highest weight of $\U_qT(v+(0,c,c))$ is $(\lambda_3-2, \lambda_1+1, \lambda_2+1)$.
\end{enumerate}

Since the highest weight $\lambda$ already determines part of the center, we now compute the actions of the remaining central elements
$F_1^m$, $F_2^m$ and $E_{31}^m$ on the modules $V$.
It suffices to compute their action on the highest weight vector $T(v)$.
Direct computation yields
\begin{align*}
    &F_{1}^mT(v)=T(v-m\delta_{11}).\\
    &F_{2}^mT(v)=\begin{cases}
T(v-m\delta_{21}), & \text{if } v_1=v_2, \\
T(v-m\delta_{22}), & \text{if } v_1\neq v_2.
\end{cases}\\
\end{align*}
Recall that $E_{31}=F_2F_1-qF_1F_2$.
A direct calculation gives
\begin{align*}
E_{31}^m T(v) =\frac{m(q^{-1}-q)}{2[m-1]_q!} T(v-m\delta_{11}).
\end{align*}
where the coefficient is nonzero.

The central elements $F_1^m$, $F_2^m$ and $E_{31}^m$ act as follows:
the action $F_1^m$ on $V$ is zero in cases $(2),(4)$ and a nonzero scalar in cases $(1),(3)$;
the action of $F_2^m$ on $V$ is zero in cases $(3),(4)$ and a nonzero scalar in cases $(1),(2)$;
the action $E_{31}^m$ on $V$ is zero in cases $(2),(4)$ and a nonzero scalar in cases $(1),(3)$.

\bigskip
\noindent
\textbf{Acknowledgments}
This work is supported by the Natural Science Foundation of China (No. 12571026),
the Natural Science Foundation of Hubei Province (No. 2025AFB716) and the Fundamental Research Funds for the Central
Universities
(No. XJ2026002101).

\end{document}